\documentclass{amsart}

\title{Sampling and Frequency Warping}

\author{Stefan Lafon} \address{Stefan Lafon, Google}
\email{stephane.lafon@gmail.com}

\author{Jacques L\'evy V\'ehel} \address{Jacques L\'evy V\'ehel, Inria
Rocquencourt\\Projet Fractales\\France}
\email{jacques.levy-vehel@inria.fr}

\author{Jacques Peyri\`ere} \address{Jacques Peyri\`ere, UMR 8628,
  CNRS, Universit\'e Paris-Sud, Université Paris-Saclay.  Universit\'e
  Paris-Sud, Math\'ematique b\^at.~425, 91405 Orsay Cedex, France.}
\email{jacques.peyriere@math.u-psud.fr}
\keywords{warping,sampling,signal processing, Sobolev spaces}

\subjclass{94A12; 11K70, 46E35}

\usepackage{graphicx} \usepackage{amsmath,amsthm} \usepackage{amssymb}
\usepackage[T1]{fontenc} \usepackage[ansinew]{inputenc}
\usepackage{amsfonts}

\newtheorem{theorem}{Theorem}
\newtheorem{lemma}[theorem]{Lemma}
\newtheorem{proposition}[theorem]{Proposition}

\theoremstyle{definition} \newtheorem{definition}[theorem]{Definition}

\newtheorem{corollary}[theorem]{Corollary}

\theoremstyle{remark} \newtheorem{remark}[theorem]{Remark}

\numberwithin{equation}{section}

\newcommand{\e}[1]{{\mathrm e}^{#1}}
\newcommand{\lo}[1]{\mbox{{\rm\small o}}\left(#1\right)}
\newcommand{\bo}[1]{{\mathcal O}\left(#1\right)}
\newcommand{\mi}{\mathrm{i}}
\newcommand{\dif}{\mathrm{d}}

\DeclareMathOperator{\sinc}{sinc}

\begin{document}

\begin{abstract}
Optimal sampling of non band-limited functions is an issue of great
importance that has attracted considerable attention. We propose to
tackle this problem through the use of a frequency warping: First, by
a non-linear shrinking of frequencies, the function is transformed
into a band-limited one.  One may then perform a decomposition in
Fourier series. This process gives rise to new orthonormal bases of
the Sobolev spaces ${\mathcal H}^\alpha$. When $\alpha$ is an integer,
these orthonormal bases can be expressed in terms of Laguerre
functions.  We study the reconstruction and speed of convergence
properties of the warping-based sampling. Besides theoretical
considerations, numerical experiments are performed.
\end{abstract}

\maketitle

\section{Introduction}\label{intro}

Sampling theory is a rich research topic that lies at the boundary
between harmonic analysis and signal processing. The basic result is
the Whittaker-Kotelnikov-Shannon theorem (WKS)
\cite{Kotelnikov,Shannon}: From a signal processing point of view,
it tells that it is possible to reconstruct exactly a band-limited
signal, i.e., a signal whose Fourier transform is compactly supported
in $(-1/2,1/2)$, from regularly spaced samples provided the sampling
frequency is not smaller than 1. From a theoretical point of view, it
asserts the identity between the space of functions in $L^2(\mathbb
R)$ whose Fourier transform vanishes outside $(-1/2,1/2)$ and the
space $\{ \sum_{k \in \mathbb Z} c_k \sinc(t-k), (c_k)_k \in l^2 \}$,
where $\sinc t = \sin \pi t/\pi t$ .

A huge literature has been devoted to generalizing the WKS theorem
in various directions. Classical extensions include the case of
non-regular sampling, multichannel sampling, or the sampling and
reconstruction of functions in more general spaces.
See~\cite{Feicht,Jerri,Marks,Marks2} for excellent reviews.

We propose the following treatment. We are given an increasing
function~$\varphi$ from the interval $(-\frac{\pi}{2},\frac{\pi}{2})$
onto ${\mathbb R}$. Then one can expand $\widehat{X}\circ \varphi$ in
Fourier series. The coefficients $a_n$ of this expansion bear the
whole information contained in $X$. Indeed, the original signal, when
it lies in a Sobolev space, can be decomposed as the sum of a series
$\displaystyle \sum_{n\in{\mathbb Z}} a_n\gamma_n$, where the
$\gamma_n$ form an orthogonal basis of the Sobolev space under
consideration.

First, in Section~2, we develop a general setting and define the
\emph{warping operators}. Afterwards, In Section~3, we particularise
the situation and are able to derive formulas for the aforementioned
$\gamma_n$ which involve known special functions.

In Section~4, we examine the speed of approximation and compare the
warping method to the usual sampling. In the last section, we give an
account of computations an simulations. In particular we test and
compare on several signals the efficiency of its representations
either by sampling or by warping.

\section{The general warping operators}\label{setting}

\subsection{Preliminaries and notation}

For the Fourier transform of a function $X\in L^1({\mathbb R})$, we
use the following convention: $${\widehat X}(\omega)=\int_{\mathbb R}
X(t)\e{-\mi t\omega}\dif t.$$

If $X$ and $Y$ are two functions in $L^2({\mathbb R})$, their scalar
product $\int_{\mathbb R} X(t)\,\overline{Y(t)}\,\dif t$ is denoted
by~$\langle X,Y\rangle$. We use the same notation if $X\in{\mathcal
S}'({\mathbb R})$ and $Y\in{\mathcal S}({\mathcal R})$.

The Heaviside function will be denoted by~$H$. The symbol~$\delta$
stands either for the Dirac measure or the Kronecker sumbol, depending
on the context.

If $u\in{\mathbb R}$ and $j\in{\mathbb N}$, we define the generalized
binomial coefficient to be
\begin{equation*}
\binom{u}{j}=\frac {u(u-1)...(u-j+1)}{j!}
\end{equation*}
if~$j>0$, and~1 if $j=0$. With this convention, for all $u\in{\mathbb
R}$ and $|x|<1$, one has $(1+x)^u = \sum_{j\ge 0} \binom{u}{j}\,x^j$.

We also have the following formula
\begin{equation}\label{binom}
\binom{-u}{j} = (-1)^j\,\binom{u+j-1}{j}.
\end{equation}

We will use some special functions, namely the Whittaker function~$W$
(\cite{magnus}, p.~88) and the generalized Laguerre polynomials
(\cite{szego}, p.~100)
\begin{equation*}
L_n^{(\alpha)}(x) = \frac{1}{n!}\, x^{-\alpha}\e{x}
\left(\frac{\dif}{\dif x}\right)^{n} \Bigl(x^{n+\alpha}\e{-x}\Bigr) =
\sum_{j=0}^{n} \binom{n+\alpha}{n-j}\,\frac{(-x)^{j}}{j!},
\end{equation*}
where~$n\in{\mathbb N}$ and $\alpha\in{\mathbb R}$.

When the parameter~$\alpha$ is larger than~$-1$, one has the following
orthogonality relation
\begin{equation*}
\int_{o}^{+\infty}
L_n^{(\alpha)}(x)\,L_m^{(\alpha)}(x)\,x^\alpha\e{-x}\, dx =
\Gamma(\alpha+1)\,\binom{n+\alpha}{n}\,\delta_{n,m}.
\end{equation*}

\subsection{The warping operators}

We are given~$\psi$, an increasing $C^1$ function from~${\mathbb R}$
onto an open interval~$I$ of~${\mathbb R}$, and~$\chi$ a positive
function defined on~$I$.

The length of the interval~$I$, when it is bounded, will be denoted
by~$|I|$ and the mapping reciprocal to~$\psi$ by~$\varphi$. We set
$\varpi = (\chi\circ\psi)^2.\psi'$.

Let ${\mathcal H}_{\varpi}$ stand for the set of distributions~$X$
on~${\mathbb R}$ the Fourier transform of which is a
function~$\widehat{X}$ such that\quad $\displaystyle \int_{\mathbb R}
|\widehat{X}(\omega)|^2\varpi(\omega)\,\dif\omega$ is finite. This
definition makes sense when $1/\varpi$ has a polynomial growth.

\textbf{\em We always assume that both $\boldsymbol{\varpi}$ and
$\mathbf 1\boldsymbol{/\varpi}$ have polynomial growth.}

If $X\in{\mathcal H}_{\varpi}$, one has
\begin{eqnarray}\label{isometry}
\int_{I} |\widehat{X}\bigl(\varphi(u)\bigr)\,\chi(u)|^2 \dif u &=&
\int_{\mathbb R} |\widehat{X}(\omega)\,
\chi\bigl(\psi(\omega)\bigr)|^2\psi'(\omega)\,\dif\omega\\ &=&
\int_{\mathbb R} |\widehat{X}(\omega)|^2 \varpi(\omega)\,\dif\omega <
+\infty.\nonumber
\end{eqnarray}

This legitimates the following definition.

\begin{definition}[Warping operator]\label{wop} Given $\psi$
and~$\chi$, we define the warping operator ${\mathcal T}^{\psi,\chi}$
as follows:
$${\mathcal T}^{\psi,\chi}: \left(
\begin{array}{ccc}
{\mathcal H}_{\varpi}&\longrightarrow &L^2(I,\dif u)\\
X&\longmapsto&(\widehat{X}\circ\varphi).\chi
\end{array}
\right)$$
\end{definition}

In view of Formula~\ref{isometry}, the operator~${\mathcal
T}^{\psi,\chi}$ is an isometry from~${\mathcal H}_{\varpi}$ onto
$L^2(I)$.  Therefore, if $\{e_n\}_{n\in{\mathbb Z}}$ is an orthonormal
basis of $L^2(I)$, one gets an orthonormal basis
$\{\gamma_n\}_{n\in{\mathbb Z}}$ of ${\mathcal H}_{\varpi}$ in the
following way:
\begin{equation}
\widehat{\gamma}_n(\omega) =
e_n\circ\psi(\omega)/\chi\bigl(\psi(\omega)\bigr) =
\sqrt{\frac{\psi'(\omega)}{\varpi(\omega)}}\,e_n\bigl(\psi(\omega)\bigr).
\end{equation}
\bigskip

As a consequence, any $X\in{\mathcal H}_{\varpi}$, can be expanded
in~${\mathcal H}_{\varpi}$ as the sum
$$X = \sum_{k\in{\mathbb Z}}\langle X,\gamma_k\rangle_{{\mathcal
H}_{\varpi}}\,\gamma_k.$$

In this decomposition, the coefficients $\langle
f,\gamma_k\rangle_{{\mathcal H}_{\varpi}}$ are expressed in terms of
the inner product of ${\mathcal H}_{\varpi}$. However, in practice, it
is useful to have an expression of these coefficients as a
distributional duality product (so that we merely need to know $X$ and
not ${\widehat X}$).

As ${\mathcal H}_{1/\varpi}$ is isometrically isomorphic to the dual
of ${\mathcal H}_{\varpi}$, one has
\begin{equation}\label{dual}
X = 2\pi\sum_{n\in{\mathbb Z}} \langle X,
\widetilde{\gamma}_n\rangle\, \gamma_{n},
\end{equation}
where $\displaystyle \widehat{\widetilde{\gamma}}_n = \varpi
\,\widehat{\gamma}_n = \sqrt{\varpi \,\psi'} \,e_n\circ\psi$, and the
scalar product is the usual pairing of functions and
distributions. One can notice that the $\widetilde{\gamma}_n$ arise
from the preceding construction with the same function~$\psi$ but
with~$\varphi'/\chi$ instead of~$\chi$.

\section{The case $\psi=\arctan$}\label{arctan}

In this case we choose for $\chi$ the function
$(1+\tan^2)^{\frac{1+\alpha}{2}}$, where $\alpha$ is a real
parameter. The corresponding warping operator will be denoted by
${\mathcal T}_{\alpha}$. Then $I=(-\pi/2,\pi/2)$ and $\varpi(\omega) =
\chi(\arctan \omega)^2/(1+\omega^2) = (1+\omega^2)^{\alpha}$.  It
results that the space ${\mathcal H}_{\varpi}$ is the ordinary Sobolev
space ${\mathcal H}^{\alpha}({\mathbb R})$.

\subsection{A family of orthonormal bases}

Choose a real parameter~$\beta$. Then the system of functions $e_n(u)
= \e{-2\mi n\,u}/\sqrt{\pi}$\ \ (for $n\in\beta+{\mathbb Z}$) is an
orthonormal basis of $L^2(I)$. The corresponding $\gamma_n$ are
defined by
\begin{equation*}
\widehat{\gamma}_n(\omega) = \frac{\e{-2\mi n\arctan
\omega}}{\sqrt{\pi}\,(1+\omega^2)^{\frac{1+\alpha}{2}}}.
\end{equation*}

We draw the reader's attention on the unusual labeling of these
bases. But this notation will prove to be convenient.

In other terms
\begin{eqnarray}
\widehat{\gamma}_n(\omega) &=&
\frac{1}{\sqrt{\pi}\,(1+\omega^2)^{\frac{1+\alpha}{2}}}
\left[\frac{1-\mi\omega}{1+\mi\omega}\right]^{n}\\
&=& \frac{1}{\sqrt{\pi}\,(1-\mi\omega)^{\frac{1+\alpha}{2}-n}
(1+\mi\omega)^{\frac{1+\alpha}{2}+n}},
\end{eqnarray}
where we use the principal determination of $\log (1+z)$, i.e., the
one which is defined on the simply connected open set ${\mathbb
  C}\setminus \{x\in{\mathbb R} \mid x\le -1\}$ and which assumes the
value~0 at $z=0$.

\begin{remark}\label{gammatilde}
The corresponding~$\widetilde{\gamma}_n$ are defined by
\begin{equation*}
\widehat{\widetilde{\gamma}}_n(\omega) = \frac{1}{\sqrt{\pi}}\,
(1-\mi\omega)^{\frac{\alpha-1}{2}+n}\,
(1+\mi\omega)^{\frac{\alpha-1}{2}-n}.
\end{equation*}
Indeed, they correspond to~$-\alpha$ instead of~$\alpha$.
\end{remark}

The $\gamma_n$ always satisfy a recursion relation.

\begin{lemma}\label{RecFormula}
For any $\alpha\in{\mathbb R}$ and any $n\in \beta+{\mathbb Z}$, we
have
\begin{equation}\label{recursion}
\left(\frac{\alpha+1}{2}+n\right)\gamma_{n+1}(t)+2(n-t)\,\gamma_n(t) +
\left(n-\frac{\alpha+1}{2}\right)\gamma_{n-1}(t)=0.
\end{equation}
\end{lemma}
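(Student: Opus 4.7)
The plan is to transfer the recursion to the Fourier side, where each $\widehat{\gamma}_n$ has the closed-form product expression displayed just before the lemma, and then verify an equivalent algebraic identity in~$\omega$.

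Write $a=(\alpha+1)/2$. Using the standard rule $\widehat{t\gamma_n(t)}(\omega)=\mi\,\widehat{\gamma}_n'(\omega)$, the recursion~(\ref{recursion}) is equivalent to the distributional identity
\begin{equation*}
(a+n)\,\widehat{\gamma}_{n+1}(\omega)+2n\,\widehat{\gamma}_n(\omega)-2\mi\,\widehat{\gamma}_n'(\omega)+(n-a)\,\widehat{\gamma}_{n-1}(\omega)=0.
\end{equation*}
Since the explicit formula shows that $\widehat{\gamma}_n$ is a smooth function (the principal branches are holomorphic on a neighborhood of $\mathbb{R}$), this identity may be checked pointwise.

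From $\widehat{\gamma}_n(\omega)=\pi^{-1/2}(1-\mi\omega)^{n-a}(1+\mi\omega)^{-n-a}$ one reads off directly
\begin{equation*}
\frac{\widehat{\gamma}_{n+1}}{\widehat{\gamma}_n}=\frac{1-\mi\omega}{1+\mi\omega},\qquad \frac{\widehat{\gamma}_{n-1}}{\widehat{\gamma}_n}=\frac{1+\mi\omega}{1-\mi\omega},
\end{equation*}
and by logarithmic differentiation
\begin{equation*}
\frac{\widehat{\gamma}_n'}{\widehat{\gamma}_n}=-\mi\left[\frac{n-a}{1-\mi\omega}+\frac{n+a}{1+\mi\omega}\right].
\end{equation*}
Dividing the target identity through by $\widehat{\gamma}_n$ reduces it to
\begin{equation*}
(a+n)\frac{1-\mi\omega}{1+\mi\omega}+2n+(n-a)\frac{1+\mi\omega}{1-\mi\omega}-2\left[\frac{n-a}{1-\mi\omega}+\frac{n+a}{1+\mi\omega}\right]=0.
\end{equation*}

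The remaining step is to multiply through by $(1-\mi\omega)(1+\mi\omega)=1+\omega^{2}$, expand, and verify that the resulting polynomial in $\omega$ has vanishing constant, linear and quadratic coefficients; this is a short, routine computation. There is no substantive obstacle: the only thing to watch is sign bookkeeping, and the fact that pointwise equality of the smooth functions $\widehat{\gamma}_{n\pm 1}$, $\widehat{\gamma}_n$ and $\widehat{\gamma}_n'$ (interpreted via the principal branch specified earlier) is equivalent, after inverse Fourier transform, to the claimed identity between the tempered distributions $\gamma_{n\pm 1}$, $\gamma_n$ and $t\gamma_n$.
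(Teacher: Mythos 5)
Your proof is correct and follows essentially the same route as the paper's: both transfer the recursion to the Fourier side via $\widehat{t\gamma_n}=\mi\,\widehat{\gamma}_n'$, verify the resulting identity using the explicit closed form of $\widehat{\gamma}_n$ (you via ratios and a logarithmic derivative, the paper via the $\e{-2\mi n\arctan\omega}$ form and direct combination of $\widehat{\gamma}_{n\pm1}$), and conclude by inverse Fourier transform. The final polynomial verification you call routine indeed checks out, so no gap remains.
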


\begin{proof}
We have
$$\sqrt{\pi}\,{\widehat \gamma_n}(\omega)=\frac{\e{-2\mi n\arctan
\omega}}{(1+\omega^2)^{\frac {\alpha+1} 2}}$$ and
$$\sqrt{\pi}\,{\widehat \gamma_n}'(\omega)=-\frac{(\alpha+1)\omega
\e{-2\mi n\arctan \omega}}{(1+\omega^2)^{\frac {\alpha+3} 2}}
-\frac{2in\e{-2\mi n\arctan \omega}}{(1+\omega^2)^{\frac{\alpha+3}2}}.$$
But
$$\sqrt{\pi}\, \bigl(\widehat \gamma_{n+1}(\omega)+
\widehat\gamma_{n-1}(\omega)\bigr) = \frac{2\e{-2\mi n\arctan
\omega}}{(1+\omega^2)^\frac{\alpha+1}{2}}\left ( \frac
2{1+\omega^2}-1\right )$$
and
$$\sqrt{\pi}\,\bigl(\widehat\gamma_{n+1}(\omega)-\widehat
\gamma_{n-1}(\omega) \bigr)= -\frac{4\mi\omega \e{-2\mi n\arctan
\omega}}{(1+\omega^2)^{\frac {\alpha+3}2}}.$$
Therefore
$${\widehat \gamma_n}'(\omega)=-\frac{\mi(\alpha+1)}{4} \bigl(\widehat
\gamma_{n+1}(\omega)-\widehat \gamma_{n-1}(\omega)\bigr) -
\frac{\mi n}{2} \bigl(\widehat \gamma_{n+1}(\omega)+\widehat
\gamma_{n-1}(\omega)+2{\widehat \gamma_n}(\omega)\bigr).$$
We conclude by taking the inverse Fourier transform.
\end{proof}

Formula~(\ref{recursion}) is reminiscent of the recursion relations
satisfied by orthogonal polynomials. Indeed, as we shall see it in the
next section, Laguerre polynomials come in when $\alpha$ is an
integer. Now, we express the $\gamma_n$ in terms of the Whittaker
function.\medskip

We have to compute the inverse Fourier transform $w_{p,q}$ of
expressions of the form $(1+\mi\omega)^{-p}(1-\mi\omega)^{-q}$, where $p$
and $q$ are two real parameters.

When $p+q>1$, $w_{p,q}$ is a function, otherwise we have to deal with
a distribution.

It turns out that $w_{p,q}$ is expressible in terms of the Whittaker
function~$W$ when $p+q>1$: according to~\cite{gradshteyn}, Formula~9
on page~345, we have
\begin{multline}\label{W}
w_{p,q}(t) = \\\frac{1}{2\pi}\int_{\mathbb R}
\frac{\e{\mi t\omega}\,\dif\omega}{(1+\mi\omega)^{p}(1-\mi\omega)^{q}} =
\begin{cases}\displaystyle
\frac{t^{\frac{p+q}{2}-1}}{2^{\frac{p+q}{2}}\,\Gamma(p)}\,
W_{\frac{p-q}{2},\frac{1-p-q}{2}}(2t) & \text{if}\ t>0,\\\ &\ \\
\frac{(-t)^{\frac{p+q}{2}-1}}{2^{\frac{p+q}{2}}\,\Gamma(q)}\,
W_{\frac{q-p}{2},\frac{1-p-q}{2}}(-2t) & \text{if}\ t<0,
\end{cases}
\end{multline}
valid for $p+q>1$.

\begin{proposition}\label{whittaker} Provided that $\alpha>0$, one has
\begin{equation*}
\begin{split}
\gamma_n(t) =
\frac{|t|^{\frac{\alpha-1}{2}}}{2^\frac{\alpha+1}{2}\sqrt{\pi}}\,
\left[\frac{1}{\Gamma(\frac{1+\alpha}{2}+n)}\,
W_{n,-\alpha/2}(2t)\,H(t)\right.\\ \left.{} +
\frac{1}{\Gamma(\frac{1+\alpha}{2}-n)}\,W_{-n,-\alpha/2}(-2t)\,H(-t)
\right].
\end{split}
\end{equation*}
\end{proposition}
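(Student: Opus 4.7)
The plan is to identify $\widehat{\gamma}_n$ as a special case of the generic function whose inverse Fourier transform $w_{p,q}$ is given by formula~(\ref{W}), and then read off the answer. Concretely, from the factored expression
\begin{equation*}
\sqrt{\pi}\,\widehat{\gamma}_n(\omega) = (1+\mi\omega)^{-p}(1-\mi\omega)^{-q}
\qquad\text{with}\qquad p=\tfrac{1+\alpha}{2}+n,\ \ q=\tfrac{1+\alpha}{2}-n,
\end{equation*}
derived earlier in the section, I would observe that $p+q=1+\alpha$, so the hypothesis $p+q>1$ of formula~(\ref{W}) is exactly the standing assumption $\alpha>0$ of the proposition. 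This is precisely why the assumption appears: for $\alpha\le 0$ the inverse Fourier transform would have to be interpreted distributionally, which is handled later.

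Next, I would substitute these values of $p$ and $q$ into (\ref{W}). The computations are purely arithmetic: $\tfrac{p+q}{2}-1=\tfrac{\alpha-1}{2}$, $2^{(p+q)/2}=2^{(1+\alpha)/2}$, $\tfrac{p-q}{2}=n$, $\tfrac{q-p}{2}=-n$, and $\tfrac{1-p-q}{2}=-\alpha/2$. Plugging these into the two branches of (\ref{W}) gives, for $t>0$,
\begin{equation*}
w_{p,q}(t)=\frac{t^{\frac{\alpha-1}{2}}}{2^{\frac{\alpha+1}{2}}\,\Gamma\!\bigl(\tfrac{1+\alpha}{2}+n\bigr)}\,W_{n,-\alpha/2}(2t),
\end{equation*}
and the symmetric expression for $t<0$ with $n$ replaced by $-n$ and $\tfrac{1+\alpha}{2}+n$ replaced by $\tfrac{1+\alpha}{2}-n$.

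Finally, since $\gamma_n(t)=\tfrac{1}{\sqrt{\pi}}\,w_{p,q}(t)$, combining the two branches with the Heaviside function~$H$ yields exactly the claimed formula.

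There is essentially no obstacle here: the only thing to check carefully is the matching of the parameters, and in particular that the principal branches of $(1\pm\mi\omega)^{-\cdot}$ used to define $\widehat{\gamma}_n$ agree with the ones implicit in the tabulated integral of~\cite{gradshteyn}. The role of the hypothesis $\alpha>0$ is simply to place us in the regime $p+q>1$ where $w_{p,q}$ is an honest function and (\ref{W}) is directly applicable.
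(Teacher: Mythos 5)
Your proposal is correct and is exactly the paper's argument: the paper's proof simply states that the proposition is a reformulation of Formula~\eqref{W}, which is the substitution $p=\tfrac{1+\alpha}{2}+n$, $q=\tfrac{1+\alpha}{2}-n$ (so $p+q=\alpha+1>1$) that you carry out explicitly. Your parameter bookkeeping $\bigl(\tfrac{p+q}{2}-1=\tfrac{\alpha-1}{2}$, $\tfrac{p-q}{2}=n$, $\tfrac{1-p-q}{2}=-\alpha/2\bigr)$ checks out, so there is nothing to add.
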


\proof This is a reformulation of Formula~\eqref{W}

Of course, in the distribution sense, $w_{p-1,q-1} =
w_{p,q}-w_{p,q}''$. By using this remark
and~Proposition(\ref{whittaker}), one is able to deal with the
case~$\alpha\le0$.

\subsection{The case when $\boldsymbol{\alpha}$ is an integer and
  $\boldsymbol{\beta=(\alpha+1)/2}$}\ 
\bigskip

The reason to take  $\displaystyle \beta=\frac{\alpha+1}{2}$ is that,
with this setting, both  $\displaystyle \frac{\alpha+1}{2}-n$
and $\displaystyle \frac{\alpha+1}{2}+n$ are integers when 
$n\in \beta+{\mathbb Z}$. It results that
we have to consider Formula~\eqref{W} when $p$ and $q$ are
integers. In this case the expression of $w_{p,q}$ is much simpler.

\begin{lemma}\label{Residu} Let $p$ be an integer and $q\in {\mathbb R}
 $ such that $p+q\ge 1$. Then
\begin{enumerate}
\item For $t>0$,
\begin{equation}\label{residu}
w_{p,q}(t) =
\begin{cases}\displaystyle
\frac{2\e{-t}}{2^{p+q}} \sum_{k=0}^{p-1} (-1)^{p-1-k}\binom{-q}{p-k-1}\,
\frac{(2t)^k}{k!} & \text{if\ }p>0,\\ 0&\text{otherwise}.
\end{cases}
\end{equation}
\item If moreover $q$ is a nonpositive integer, then $w_{p,q}(t)=0$
  for $t<0$.
\end{enumerate}
\end{lemma}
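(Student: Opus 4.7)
The plan is to evaluate the defining Fourier integral
$$w_{p,q}(t)=\frac{1}{2\pi}\int_{\mathbb R}\frac{\e{\mi t\omega}\,\dif\omega}{(1+\mi\omega)^{p}(1-\mi\omega)^{q}}$$
by contour integration. The geometric picture is that, with $p$ an integer, $(1+\mi\omega)^{-p}$ is meromorphic, with its only singularity a pole of order $\max(p,0)$ at $\omega=\mi$, while the principal-branch cut of $(1-\mi\omega)^{-q}$ lies along the ray $\{-\mi y:y\ge1\}$, strictly in the \emph{lower} half-plane. Moreover the integrand decays like $|\omega|^{-(p+q)}$ at infinity, and the hypothesis $p+q\ge1$ supplies precisely the decay that Jordan's lemma needs.

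For part~(1), $t>0$, I would close the real axis by a large upper semicircle. Jordan's lemma kills the semicircular contribution. If $p\le0$ the contour encloses no singularity, so $w_{p,q}(t)=0$. If $p\ge1$, writing $1+\mi\omega=\mi(\omega-\mi)$, the residue theorem gives
$$w_{p,q}(t)=\frac{\mi^{1-p}}{(p-1)!}\left.\left(\frac{\dif}{\dif\omega}\right)^{p-1}\!\frac{\e{\mi t\omega}}{(1-\mi\omega)^{q}}\right|_{\omega=\mi}.$$
Leibniz's rule, combined with
$$\left(\frac{\dif}{\dif\omega}\right)^{j}(1-\mi\omega)^{-q}=\mi^{j}\,j!\binom{q+j-1}{j}(1-\mi\omega)^{-q-j}$$
and the evaluations $1-\mi\omega=2$, $\e{\mi t\omega}=\e{-t}$ at $\omega=\mi$, makes all powers of $\mi$ cancel and leads to
$$w_{p,q}(t)=2^{1-p-q}\e{-t}\sum_{k=0}^{p-1}\binom{q+p-k-2}{p-1-k}\frac{(2t)^{k}}{k!}.$$
A single application of identity~\eqref{binom} rewrites $\binom{q+p-k-2}{p-1-k}$ as $(-1)^{p-1-k}\binom{-q}{p-1-k}$, producing~\eqref{residu}.

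For part~(2), when $q$ is a nonpositive integer, $(1-\mi\omega)^{-q}$ is just a polynomial, so the entire integrand is holomorphic throughout the lower half-plane. For $t<0$ I would close by a large lower semicircle; Jordan's lemma still applies since the net decay remains of order $p+q\ge1$, and Cauchy's theorem then yields $w_{p,q}(t)=0$. The main bookkeeping hurdle is the Leibniz expansion in part~(1)---tracking the powers of $\mi$ and the reindexing needed to recognise the binomial coefficient $\binom{-q}{p-1-k}$---whereas the potential branch-cut complication from a non-integer $q$ is in fact harmless, since that cut is confined to the lower half-plane and is never touched by the contour used for $t>0$.
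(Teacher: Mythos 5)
Your proof is correct and follows exactly the route the paper indicates: closing the contour in the upper half-plane for $t>0$ (with the residue at $\omega=\mi$ giving \eqref{residu} after Leibniz's rule and identity~\eqref{binom}) and in the lower half-plane for $t<0$ when $q$ is a nonpositive integer. The paper's proof is only a one-line sketch of this same argument, so your write-up simply supplies the computational details it omits.
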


\proof Formula~\eqref{residu} is obtained by contour integration
in the upper half plane, the second assertion by contour integral in
the lower half plane.\medskip

 In this setting we have the following facts.

\begin{lemma}\label{symmetry}
For $n\in\frac{\alpha+1}{2}+{\mathbb Z}$, one has $\gamma_{-n}(t) =
\gamma_n(-t)$.
\end{lemma}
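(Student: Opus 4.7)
The plan is to read the symmetry off directly from the explicit formula
$$\widehat{\gamma}_n(\omega) = \frac{\e{-2\mi n\arctan\omega}}{\sqrt{\pi}\,(1+\omega^2)^{(1+\alpha)/2}}.$$
Nothing in this identity requires $\alpha$ to be an integer; however, the hypothesis $n\in\frac{\alpha+1}{2}+\mathbb{Z}$ (with $\alpha$ an integer) guarantees that $-n$ also lies in $\frac{\alpha+1}{2}+\mathbb{Z}$, so $\gamma_{-n}$ is a legitimate member of the basis we are considering, and the statement makes sense.

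First I would observe that the denominator $(1+\omega^2)^{(1+\alpha)/2}$ is an even function of $\omega$, while $\arctan$ is odd. Consequently,
$$\widehat{\gamma}_n(-\omega) = \frac{\e{-2\mi n\arctan(-\omega)}}{\sqrt{\pi}\,(1+\omega^2)^{(1+\alpha)/2}} = \frac{\e{-2\mi(-n)\arctan\omega}}{\sqrt{\pi}\,(1+\omega^2)^{(1+\alpha)/2}} = \widehat{\gamma}_{-n}(\omega).$$

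Second, I would invoke the elementary Fourier identity: if $g(t)=f(-t)$, then $\widehat{g}(\omega)=\widehat{f}(-\omega)$. Applied here, this says that $\gamma_n(-\,\cdot\,)$ and $\gamma_{-n}$ have the same Fourier transform, hence coincide. This gives $\gamma_{-n}(t)=\gamma_n(-t)$, as claimed.

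There is no real obstacle: the lemma is a one-line consequence of the reflection symmetry of $\widehat{\gamma}_n$ in $\omega$, combined with the general fact that reflection on the frequency side corresponds to reflection on the time side. The only care needed is to note that the statement is valid interpreted either as an identity of functions (when $\alpha$ is large enough for $\gamma_n$ to be a continuous function, e.g.\ via Proposition~\ref{whittaker}) or as an identity of tempered distributions in general, since the Fourier transform and the reflection $t\mapsto -t$ both act on $\mathcal{S}'(\mathbb{R})$.
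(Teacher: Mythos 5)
Your proof is correct: with the paper's convention $\widehat{X}(\omega)=\int X(t)\e{-\mi t\omega}\,\dif t$, reflection in $t$ corresponds to reflection in $\omega$, and the oddness of $\arctan$ together with the evenness of $(1+\omega^2)^{(1+\alpha)/2}$ gives $\widehat{\gamma}_n(-\omega)=\widehat{\gamma}_{-n}(\omega)$, which is exactly the argument the paper leaves implicit (it states Lemma~\ref{symmetry} without proof). Your remarks that $-n$ stays in $\frac{\alpha+1}{2}+\mathbb{Z}$ and that the identity holds in $\mathcal{S}'(\mathbb{R})$ when $\gamma_n$ is only a distribution are appropriate and complete the argument.
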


\begin{lemma}\label{poly}Suppose~$\alpha\ge0$ and~$t>0$.Then
\begin{enumerate}
\item if $n\le -\frac{\alpha+1}{2}$, $\gamma_n(t)=0$,
\item if $n> -\frac{\alpha+1}{2}$,
\begin{eqnarray}
\gamma_n(t) &=& \frac{\e{-t}}{2^{\alpha}\sqrt{\pi}}
\sum_{k=0}^{n+\frac{\alpha-1}{2}}
(-1)^{n+\frac{\alpha-1}{2}-k}\binom{n-\frac{\alpha+1}{2}}
    {n+\frac{\alpha-1}{2}-k} \frac {(2t)^k} {k!}\label{gamma1}\\
&=& \frac{\e{-t}}{2^{\alpha}\sqrt{\pi}}
    \sum_{k=0}^{n+\frac{\alpha-1}{2}}
    \binom{\alpha-1-k}{\frac{\alpha-1}{2}+n-k} \frac {(2t)^k}
          {k!}.\label{gamma2}
\end{eqnarray}
\end{enumerate}
\end{lemma}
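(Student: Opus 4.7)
The plan is to read off Lemma~\ref{poly} directly from Lemma~\ref{Residu} applied to $\widehat{\gamma}_n$.

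First, I would rewrite
$$\sqrt{\pi}\,\widehat{\gamma}_n(\omega) = (1-\mi\omega)^{-q}(1+\mi\omega)^{-p}, \qquad p = \tfrac{\alpha+1}{2}+n, \quad q = \tfrac{\alpha+1}{2}-n,$$
so that $\sqrt{\pi}\,\gamma_n = w_{p,q}$ in the notation of Lemma~\ref{Residu}. Since $\alpha$ is an integer and $n\in\beta+\mathbb Z = \frac{\alpha+1}{2}+\mathbb Z$, both $p$ and $q$ are integers, and $p+q=\alpha+1\ge 1$, so Lemma~\ref{Residu} applies (with the roles of $p$ and $q$ exchanged when convenient).

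For $t>0$, the condition $p\le 0$ in the lemma translates to $n\le -\frac{\alpha+1}{2}$, which yields assertion~(1). Otherwise, plugging $p-1=n+\frac{\alpha-1}{2}$, $-q = n-\frac{\alpha+1}{2}$, and $p+q=\alpha+1$ into~\eqref{residu} and dividing by $\sqrt\pi$ gives Formula~\eqref{gamma1} after a routine index substitution $k\mapsto k$ (no shift is needed).

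To obtain the second form~\eqref{gamma2}, I would apply the generalized-binomial identity~\eqref{binom}:
$$\binom{n-\tfrac{\alpha+1}{2}}{n+\tfrac{\alpha-1}{2}-k} = \binom{-\bigl(\tfrac{\alpha+1}{2}-n\bigr)}{n+\tfrac{\alpha-1}{2}-k} = (-1)^{n+\tfrac{\alpha-1}{2}-k}\binom{\alpha-1-k}{n+\tfrac{\alpha-1}{2}-k},$$
because the top of the transformed coefficient is $\bigl(\tfrac{\alpha+1}{2}-n\bigr)+\bigl(n+\tfrac{\alpha-1}{2}-k\bigr)-1 = \alpha-1-k$. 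The alternating sign in~\eqref{gamma1} is cancelled exactly by the sign produced here, yielding~\eqref{gamma2}.

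There is no serious obstacle: the proof is essentially bookkeeping, with the only mild subtlety being the correct identification of $p$ and $q$ and the careful tracking of the parity factor $(-1)^{n+(\alpha-1)/2-k}$ when passing from~\eqref{gamma1} to~\eqref{gamma2} via~\eqref{binom}.
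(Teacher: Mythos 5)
Your proposal is correct and follows the paper's own route: the paper proves Lemma~\ref{poly} precisely by reading it off from Lemma~\ref{Residu} with $p=\frac{\alpha+1}{2}+n$, $q=\frac{\alpha+1}{2}-n$ (so $p+q=\alpha+1\ge1$), the case $p\le0$ giving assertion~(1) and the substitution giving~\eqref{gamma1}, with~\eqref{gamma2} following from identity~\eqref{binom} exactly as you compute. Your write-up simply makes explicit the bookkeeping that the paper compresses into ``a mere rewriting of~\eqref{residu}.''
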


\proof This is a mere rewriting of~(\ref{residu}) (notice that the
corresponding $p$ and $q$ satisfy $p+q = \alpha+1\ge 1$).

\begin{proposition}[The case $\alpha\ge0$]\label{positivalpha} Suppose
$\alpha\ge0$ and $n\in \frac{\alpha+1}{2}+{\mathbb Z}$.
\begin{itemize}
\item If $n \ge \frac{\alpha+1}{2}$,
\begin{equation}
\gamma_{\pm n}(t) =
\frac{(-1)^{n-\frac{\alpha+1}{2}}|t|^\alpha\e{-|t|}}{\sqrt{\pi}} \,
\frac{\bigl( n-\frac{\alpha+1}{2}\bigr)!}
{\bigl(n+\frac{\alpha-1}{2}\bigr)!}  \,
L_{n-\frac{\alpha+1}{2}}^{(\alpha)}(2|t|)\, H(\pm t),
\end{equation}
where $L_j^{(\alpha)}$ stands for the $j$-th Laguerre polynomial of
order~$\alpha$.

\item If $|n| < \frac{\alpha+1}{2}$,
\begin{multline}
\quad \gamma_n(t) = \frac{\e{-|t|}}{2^{\alpha}\sqrt{\pi}}\left[ H(t)
\sum_{k=0}^{\frac{\alpha-1}{2}+n}
\binom{\alpha-1-k}{\frac{\alpha-1}{2}+n-k} \frac {(2t)^k}
{k!}\right.\\
\left. {}+ H(-t) \sum_{k=0}^{\frac{\alpha-1}{2}-n}
\binom{\alpha-1-k}{\frac{\alpha-1}{2}-n-k} \frac {(-2t)^k}
{k!}\right]. \quad
\end{multline}
\end{itemize}
\end{proposition}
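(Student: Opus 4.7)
\emph{Proof proposal.} The plan is to obtain both bullets of Proposition~\ref{positivalpha} by post-processing Lemma~\ref{poly}, using Lemma~\ref{symmetry} to pass from the $t>0$ side to the $t<0$ side (or from $\gamma_n$ to $\gamma_{-n}$) and invoking Lemma~\ref{Residu}(2) to locate the half-line on which $\gamma_n$ vanishes. In this way nothing analytic is new: the formula~\eqref{gamma1}, or its companion~\eqref{gamma2}, already encodes everything, and what remains is to massage the finite sum.

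First I would reduce the proposition to the half-line $t>0$. Writing $\sqrt{\pi}\,\widehat{\gamma}_n$ as $\bigl((1+\mi\omega)^{p}(1-\mi\omega)^{q}\bigr)^{-1}$ with $p=\frac{\alpha+1}{2}+n$ and $q=\frac{\alpha+1}{2}-n$, the hypothesis $n\ge\frac{\alpha+1}{2}$ turns $q$ into a nonpositive integer, so Lemma~\ref{Residu}(2) forces $\gamma_n(t)=0$ for $t<0$; Lemma~\ref{symmetry} then yields $\gamma_{-n}(t)=0$ for $t>0$. This is the source of the factor $H(\pm t)$ in the first bullet. For the middle range $|n|<\frac{\alpha+1}{2}$ no such vanishing occurs, and the value of $\gamma_n$ on $\{t<0\}$ is obtained from its value on $\{t>0\}$ with $n$ replaced by $-n$, again via Lemma~\ref{symmetry}. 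Consequently the entire proposition reduces to computing $\gamma_n(t)$ for $t>0$.

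For $|n|<\frac{\alpha+1}{2}$ there is nothing further to do: the displayed expression is exactly~\eqref{gamma2} on $\{t>0\}$ together with its symmetrized counterpart on $\{t<0\}$. The genuine work lies in the case $n\ge\frac{\alpha+1}{2}$. Here I set $m=n-\frac{\alpha+1}{2}\in\mathbb{N}$, so that $n+\frac{\alpha-1}{2}=m+\alpha$, and start from~\eqref{gamma1}. The summand features the binomial $\binom{m}{m+\alpha-k}$, which vanishes outside the range $\alpha\le k\le m+\alpha$; the substitution $k=\alpha+j$ collapses the sum to $j=0,\dots,m$ and extracts a factor $(2t)^\alpha$ that combines with the prefactor $2^{-\alpha}$ to give $t^\alpha$. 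The resulting inner sum can be matched to a Laguerre polynomial via the elementary identity $\binom{m}{j}/(\alpha+j)! = \bigl(m!/(m+\alpha)!\bigr)\,\binom{m+\alpha}{m-j}/j!$, which converts it into $\bigl(m!/(m+\alpha)!\bigr)L_m^{(\alpha)}(2t)$ by the expansion of $L_m^{(\alpha)}$ recalled in Section~\ref{setting}. Re-expressing $m$ in terms of $n$ and $\alpha$ and invoking Lemma~\ref{symmetry} for $\gamma_{-n}$ yields the announced formula.

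The expected obstacle is purely bookkeeping: one must track the sign $(-1)^m$, the ratio $m!/(m+\alpha)!$, the emerging $t^\alpha$, and the $2^\alpha$ cancellation simultaneously while shifting the summation index and applying the binomial-to-Laguerre identity. No new analytic ingredient is needed beyond what Lemmas~\ref{Residu}, \ref{symmetry} and~\ref{poly} already provide.
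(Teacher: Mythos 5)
Your proposal is correct and follows essentially the same route as the paper's proof: start from Formula~\eqref{gamma1}, drop the null terms $k<\alpha$ (your substitution $k=\alpha+j$), extract $t^\alpha$ against the $2^{-\alpha}$ prefactor, convert the sum to $\bigl(m!/(m+\alpha)!\bigr)L_m^{(\alpha)}(2t)$ via the same binomial identity, and obtain the support statement from $q\le 0$ (Lemma~\ref{Residu}(2)) together with Lemma~\ref{symmetry}, with the middle range $|n|<\frac{\alpha+1}{2}$ read off directly from~\eqref{gamma2}. Nothing to add.
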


\begin{proof} By using Formula~\eqref{gamma1} with $p=(\alpha+1)/2+n$ and
  $q=(\alpha+1)/2-n$ one gets, for $t>0$ and $n\ge (\alpha+)/2$,
\begin{equation*}
\gamma_n(t) = \frac{\e{-t}}{2^\alpha\sqrt{\pi}}
\sum_{k=0}^{n+\frac{\alpha-1}{2}} (-1)^{n+\frac{\alpha-1}{2}}
\binom{n-\frac{\alpha+1}{2}}{n+\frac{\alpha-1}{2}-k}
\,\frac{(-2t)^k}{k!}.
\end{equation*}
The terms of this sum corresponding to $k<\alpha$ being nul, one gets
\begin{eqnarray*}
\gamma_n(t) &=& \frac{(-1)^{n-\frac{\alpha+1}
    {2}}t^\alpha\e{-t}}{\sqrt{\pi}} \sum_{k=0}^{n-\frac{\alpha+1}{2}}
\binom{n-\frac{\alpha+1}{2}}{n-\frac{\alpha+1}{2}-k}\,
\frac{(-2t)^{k}}{(k+\alpha)!}\\
&=& \frac{(-1)^{n-\frac{\alpha+1} {2}}t^\alpha\e{-t}}{\sqrt{\pi}}
\frac{(n-\frac{\alpha+1}{2})!}{(n+\frac{\alpha-1}{2})!}
\sum_{k=0}^{n-\frac{a+1}{2}}
\binom{n+\frac{\alpha-1}{2}}{n-\frac{\alpha+1}{2}-k}\,
\frac{(-2t)^k}{k!}\\
&=& \frac{(-1)^{n-\frac{\alpha+1} {2}}t^\alpha\e{-t}}{\sqrt{\pi}}
\frac{(n-\frac{\alpha+1}{2})!}{(n+\frac{\alpha-1}{2})!}
L_{n-\frac{\alpha+1}{2}}^{(\alpha)}(2t).
\end{eqnarray*}

Moreover, since $q\le0$, one has $\gamma_n(t)=0$ for $t<0$. The case
$n\le -(\alpha+1)/2$ results from $\gamma_{-n}(t) = \gamma_n(-t)$.

The second assertion results from Formula~\eqref{gamma2}

\end{proof}

\begin{proposition}[The case $\alpha\le -1$]\label{negativalpha}
Suppose $\alpha\le-1$.
\begin{itemize}
\item If $|n|<\frac{|\alpha|+1} 2$, then
$$\gamma_n(t)=\frac{1}{\sqrt{\pi}}
\,(\delta+\delta')^{-\frac{\alpha+1}{2}-n}*
(\delta-\delta')^{-\frac{\alpha+1}{2}+n},$$
where powers are iterated convolutions.
\item If $n\geq \frac{|\alpha|+1} 2$, then
\begin{equation*}
\begin{split}
\gamma_{\pm n}(t)= {}&
(-1)^{n+\frac{\alpha-1}{2}}\,\frac{2^{-\alpha}}{\sqrt{\pi}}\,\e{-|t|}
L_{n+\frac{\alpha-1}{2}}^{(-\alpha)}(2|t|)\,H(\pm t)\\
&+\frac{(-1)^{n+\frac{\alpha+1}{2}}}{2^{\alpha+1}\sqrt{\pi}}
\sum_{j=0}^{-(\alpha+1)} (-2)^{-j}
\binom{n-\frac{\alpha+1}{2}}{-(\alpha+1+j}\,(\delta\pm\delta')^{j}.
\end{split}
\end{equation*}
\end{itemize}
\end{proposition}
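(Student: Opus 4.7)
The plan is to invert the Fourier transform directly from the rational form of $\widehat\gamma_n$. Set $p=\frac{\alpha+1}{2}+n$ and $q=\frac{\alpha+1}{2}-n$, so that $\sqrt{\pi}\,\widehat\gamma_n(\omega)=(1+\mi\omega)^{-p}(1-\mi\omega)^{-q}$ with $p+q=\alpha+1\le 0$ (in particular, Formula~\eqref{W} is unavailable). By Lemma~\ref{symmetry}, $\gamma_{-n}(t)=\gamma_n(-t)$, so it suffices to treat $+n$; reflection in $t$ sends $\delta+\delta'$ to $\delta-\delta'$ and $H(t)$ to $H(-t)$, producing the $-n$ formula. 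The basic inversion facts are $\widehat{\delta\pm\delta'}(\omega)=1\pm\mi\omega$, so that positive integer powers of $1\pm\mi\omega$ come from convolution powers of $\delta\pm\delta'$, and $\mathcal F^{-1}\bigl[(1+\mi\omega)^{-m}\bigr](t)=\frac{t^{m-1}}{(m-1)!}\e{-t}H(t)$ for $m\ge 1$.

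For the first bullet, $|n|<\frac{|\alpha|+1}{2}$ forces $\alpha\le p\le 0$ and $\alpha\le q\le 0$, so $-p$ and $-q$ are nonnegative integers and $\sqrt{\pi}\,\widehat\gamma_n$ is the polynomial $(1+\mi\omega)^{-p}(1-\mi\omega)^{-q}$. Its inverse Fourier transform is exactly the stated convolution of the requested powers of $\delta\pm\delta'$, with no analytic difficulty.

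For the second bullet, $p\ge 1$ is a positive integer, $q$ is a nonpositive integer, and $-q\ge p$. The idea is to write $1-\mi\omega=2-(1+\mi\omega)$ and expand by the binomial theorem:
\[
\sqrt{\pi}\,\widehat\gamma_n(\omega)=\sum_{j=0}^{-q}\binom{-q}{j}2^{-q-j}(-1)^j(1+\mi\omega)^{j-p}.
\]
Indices $j<p$ produce negative integer powers of $1+\mi\omega$, whose inverse Fourier transforms are multiples of $\frac{t^{p-j-1}}{(p-j-1)!}\e{-t}H(t)$; indices $j\ge p$ produce nonnegative integer powers, inverting to multiples of $(\delta+\delta')^{j-p}$. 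Reindexing the distributional sum by $m=j-p$ (so $m$ runs from $0$ to $-q-p=-(\alpha+1)$) and applying the symmetry $\binom{-q}{m+p}=\binom{-q}{-(\alpha+1)-m}$ gives the claimed distributional terms, with the prefactor $\frac{(-1)^p}{2^{\alpha+1}\sqrt{\pi}}$ coming from $2^{-q-p}=2^{-\alpha-1}$ and $(-1)^p=(-1)^{n+(\alpha+1)/2}$.

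The main obstacle is identifying the function part as a Laguerre polynomial. Collecting the terms $j=0,\dots,p-1$ and reindexing $k=p-1-j$ yields
\[
\frac{(-1)^{p-1}2^{-\alpha}}{\sqrt{\pi}}\,\e{-t}H(t)\sum_{k=0}^{p-1}\binom{-q}{p-1-k}\frac{(-2t)^k}{k!}.
\]
The identity $-q=p-1-\alpha$, which is merely $p+q=\alpha+1$ rewritten, turns the binomial into $\binom{(p-1)+(-\alpha)}{(p-1)-k}$, which is precisely the $k$-th coefficient in the explicit series for $L_{p-1}^{(-\alpha)}(2t)$ recalled in the preliminaries. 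Since $p-1=n+\frac{\alpha-1}{2}$, this reproduces the Laguerre term with the correct order, sign, and scaling, completing the identification.
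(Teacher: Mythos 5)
Your argument is correct and follows essentially the same route as the paper's proof: the case $|n|<\frac{|\alpha|+1}{2}$ by direct inversion of the polynomial symbol into convolution powers of $\delta\pm\delta'$, and the case $n\ge\frac{|\alpha|+1}{2}$ by writing $1-\mi\omega=2-(1+\mi\omega)$, expanding binomially, splitting into the singular part (nonnegative powers of $1+\mi\omega$) and the regular part (negative powers, inverting to $t^{k}\e{-t}H(t)$ terms), and recognizing the latter as the explicit series for $L_{n+\frac{\alpha-1}{2}}^{(-\alpha)}(2t)$, with the $-n$ case obtained from Lemma~\ref{symmetry}. The only deviations are a harmless reversal of the binomial indexing and an explicit appeal to the symmetry of the binomial coefficient, which the paper performs implicitly.
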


\begin{proof} As previously, we have to compute $\gamma_n$ knowing its
Fourier transform
\begin{equation*}
\widehat{\gamma}_n(\omega)=\frac{1} {\sqrt{\pi}\,
(1+\mi\omega)^{\frac{\alpha+1}{2}+n}
(1-\mi\omega)^{\frac{\alpha+1}{2}-n}}.
\end{equation*}

\begin{itemize}
\item If $|n|<\frac{|\alpha|+1}{2}$, then

\begin{equation*}
\sqrt{\pi}\,{\widehat \gamma_n}(\omega) =
(1+\mi\omega)^{-\frac{\alpha+1}{2}-n}(1-\mi\omega)^{-\frac{\alpha+1}{2}+n},
\end{equation*}
which gives the first assertion.\\

\item If $n\geq \frac{|\alpha|+1} 2$, then $n\geq \frac{\alpha+1}{2}$
and one writes
\begin{eqnarray}
\sqrt{\pi}\,{\widehat \gamma_n}(\omega)&=&\frac
{\bigl(2-(1+\mi\omega)\bigr)^{n-\frac{\alpha+1}{2}}}
{(1+\mi\omega)^{n+\frac{\alpha+1}{2}}}\nonumber\\
&=&\sum_{j=0}^{n-\frac{\alpha+1}{2}}\binom{n-\frac{\alpha+1}{2}}{j}
\frac{2^j(-1)^{n-\frac{\alpha+1}{2}-j}}
{(1+\mi\omega)^{j+\alpha+1}}.\label{Int1}
\end{eqnarray}

We split the sum in equation (\ref{Int1}) into two sums, the first one
being the sum for $j=0$ to $-(\alpha+1)$,
$$\sum_{j=0}^{-(1+\alpha)} \binom{n-\frac{\alpha+1}{2}}{j} 2^j
(-1)^{n-\frac{\alpha+1}{2}-j} (1+\mi\omega)^{-(1+\alpha+j)}$$
which can be rewritten as
$$\frac{(-1)^{n+\frac{\alpha+1}{2}}}{2^{\alpha+1}}
\sum_{j=0}^{-(1+\alpha)} \binom{n-\frac{\alpha+1}{2}}{-(\alpha+1+j)}
(-2)^{-j} (1+\mi\omega)^j.$$
and gives the singular component.

The second sum
$$\sum_{j=-\alpha}^{n-\frac{\alpha+1}{2}}
\binom{n-\frac{\alpha+1}{2}}{j} 2^j(-1)^{n-\frac{\alpha+1}{2}-j}
(1+\mi\omega)^{-(1+\alpha+j)}$$
is the Fourier transform of
$$\e{-t}H(t)\sum_{j=-\alpha}^{n-\frac{\alpha+1}{2}}
\binom{n-\frac{\alpha+1}{2}}{j} 2^j
(-1)^{n-\frac{\alpha+1}{2}-j}\frac{t^{j+\alpha}}{(j+\alpha)!},$$
that is of
$$(-1)^{n+\frac{\alpha-1}{2}}2^{-\alpha}\e{-t}H(t)
\sum_{k=0}^{n-\frac{\alpha+1}{2}+\alpha}
\binom{n-\frac{\alpha+1}{2}+\alpha-\alpha}
{n-\frac{\alpha+1}{2}+\alpha-k}\frac{(-2t)^k}{k!},$$
or of
$$(-1)^{n+\frac{\alpha-1}{2}}2^{-\alpha}
\e{-t}H(t)\,L_{n+\frac{\alpha-1}{2}}^{-\alpha}(2t).$$ 

\item Le case $n\le -\frac{|\alpha|+1} 2$ is handled by
Lemma~\ref{symmetry}.
\end{itemize}
\end{proof}

\subsection{Decomposition of Sobolev spaces}

In this section, $\alpha$ is a positive integer.

Proposition~\ref{positivalpha} shows that we have to deal with three
types of basis functions:
\begin{itemize}
\item the functions $\gamma_n$ with $n\leq -\frac{\alpha+1}{2}$, that
are supported in ${\mathbb R}_{-}.$,
\item the functions $\gamma_n$ with $|n|<\frac{\alpha+1}{2}$, that are
two-sided functions, localized around the origin, and
\item the functions $\gamma_n$ with $n\geq \frac{\alpha+1}{2}$, that
are supported in ${\mathbb R}_+$.
\end{itemize}

Let ${\mathcal H}_-^\alpha$, ${\mathcal H}_\circ^\alpha$, and
${\mathcal H}_+^\alpha$ stand for the closed subspaces of ${\mathcal
H}^{\alpha}$ generated respectively by $\{\gamma_n\}_{n\le
-\frac{\alpha+1}{2}}$, $\{\gamma_n\}_{|n|<\frac{\alpha+1}{2}}$, and
$\{\gamma_n\}_{n\ge \frac{\alpha+1}{2}}$.

${\mathcal H}_+^\alpha$ and ${\mathcal H}_-^\alpha$ are the sets of
elements of ${\mathcal H}^\alpha$ which are supported in $[0,+\infty)$
and $(-\infty,0]$ respectively. The orthogonal complement ${\mathcal
H}_\circ^\alpha$ of ${\mathcal H}_+^\alpha \oplus {\mathcal
H}_-^\alpha$ is of dimension~$\alpha$.

Due to Proposition~\ref{negativalpha} The elements
$\widetilde{\gamma}_n$ for $|n|<\frac{\alpha+1}{2}$, are linear
combinations of $\delta$, $\delta'$,\dots, $\delta^{(\alpha-1)}$. This
means that the projection $X_{\circ}$ of $X\in{\mathcal H}^{\alpha}$
on ${\mathcal H}_\circ^{\alpha}$ can be expressed as
\begin{equation*}
X(t) = \sum_{j=0}^{\alpha-1} X^{(j)}(0)\, Y_j(t),
\end{equation*}
where the functions~$Y_j$ are linear combinations of the $\gamma_n$
for $|n|<\frac{\alpha+1}{2}$.

In these conditions, it is not difficult to show that
\begin{equation*}
Y_j(t) = \frac{t^j}{j!}\,\e{-|t|}\sum_{k=0}^{\alpha-j}
\frac{|t|^k}{k!}.
\end{equation*}

The following figure shows the $Y$ functions when $\alpha=5$.

\begin{figure}[htbp]
\begin{center}
\includegraphics[width=12cm]{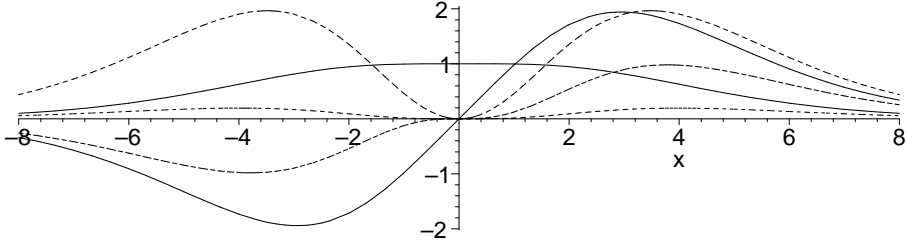}
\caption{The Y functions for $\alpha=5$}\label{middle}
\end{center}
\end{figure}

If $X_+$ is the projection of $X$ on ${\mathcal H}_+^{\alpha}$, one
has (see Equation\eqref{dual} and Remark~\ref{gammatilde})
\begin{equation*}
X_+(t) = 2\pi \sum_{n\ge\frac{\alpha+1}{2}} a_n\, \gamma_n(t),
\end{equation*}
where
\begin{equation*}
a_n = \frac{(-1)^{n-\frac{\alpha+1}{2}}2^{\alpha-1}}{\sqrt{\pi}}
\int_{0}^{+\infty} X_+(t)\, L_{n-\frac{\alpha+1}{2}}^{(\alpha)}(2t)\,
\e{-t}\, \dif t,
\end{equation*}
due to Proposition~\ref{negativalpha}.

If $0\le k<\alpha$ and $n\ge \frac{\alpha+1}{2}$, one has $\langle
Y_k,\gamma_{\pm n}\rangle =0$, from which one deduces
\begin{equation*}
\int_{0}^{+\infty} Y_k(t)\, L_{n-\frac{\alpha+1}{2}}^{(\alpha)}(2t)\,
\e{-t}\, \dif t = -\frac{1}{2}(\pm1)^k\sum_{j=k}^{\alpha-1} (-2)^{-j}
\binom{n+\frac{\alpha-1}{2}}{\alpha-1-j}\, \binom{j}{k}.
\end{equation*}

Therefore, if $X\in{\mathcal H}^{\alpha}$, and if, for $n\ge
\frac{\alpha+1}{2}$, one defines
\begin{equation*}\begin{split}
a_{\pm n} = \frac{(-1)^{n-\frac{\alpha+1}{2}}2^{\alpha}}{\sqrt{\pi}}
\Bigg[\int_{\mathbb R} X(t)\,
L_{n-\frac{\alpha+1}{2}}^{(\alpha)}(2|t|)\, \e{-|t|}H(\pm t)\, \dif t\\
{} + \frac{1}{2}\sum_{k=0}^{\alpha-1}(\pm1)^k X^{(k)}(0)
\sum_{j=k}^{\alpha-1} (-2)^{-j}
\binom{n+\frac{\alpha-1}{2}}{\alpha-1-j}\, \binom{j}{k}\Bigg],
\end{split}\end{equation*}
then one has
\begin{equation*}
X = \sum_{j=0}^{\alpha-1} X^{(j)}(0)\, Y_j +
\sum_{n=\frac{\alpha+1}{2}}^{+\infty} \bigl(
a_n\gamma_n+a_{-n}\gamma_{-n}\bigr).
\end{equation*}

The four following figures show some functions $\gamma_n$.

\begin{figure}[htbp]
\begin{center}
\includegraphics[width=6cm]{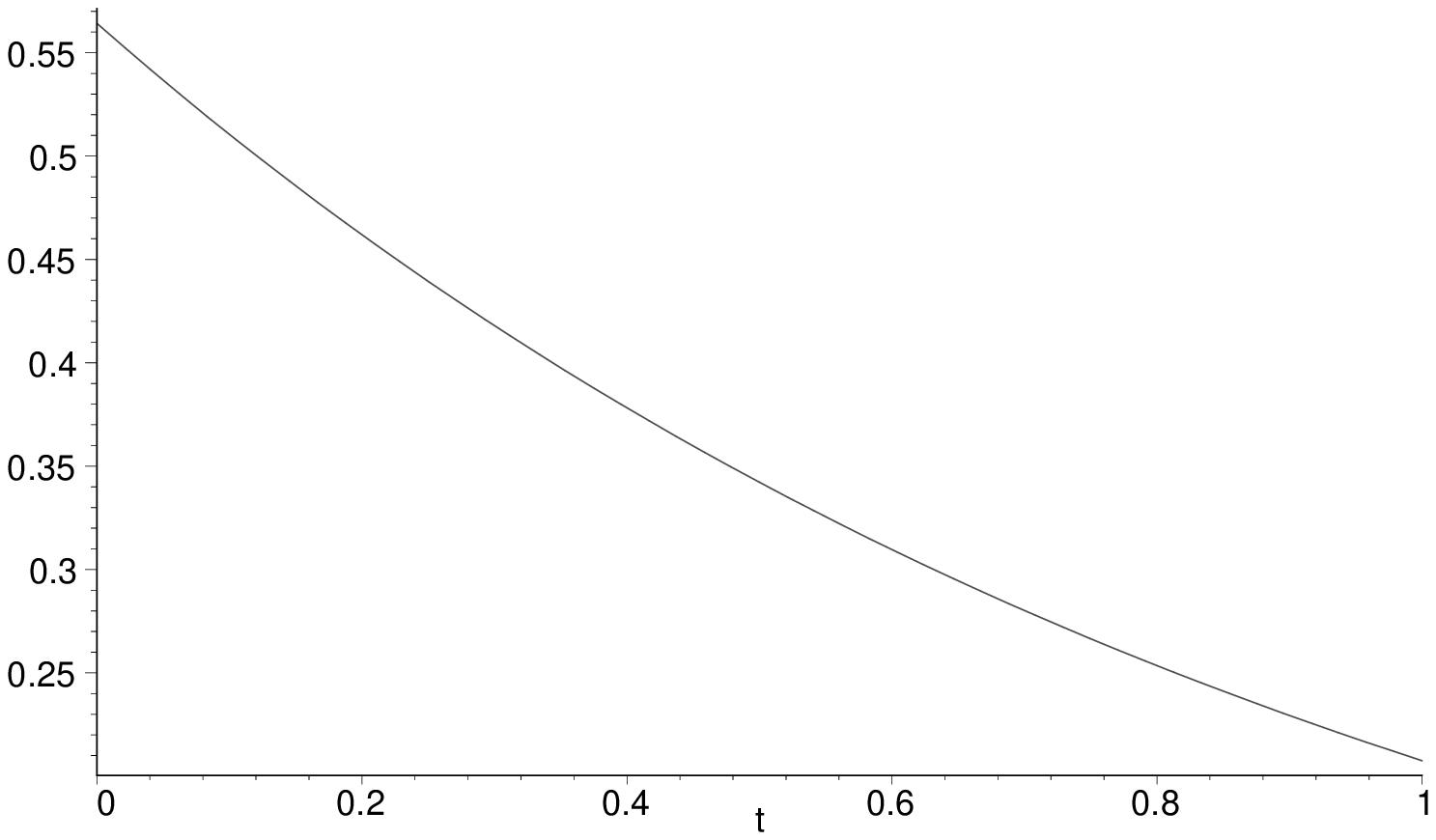}
\includegraphics[width=6cm]{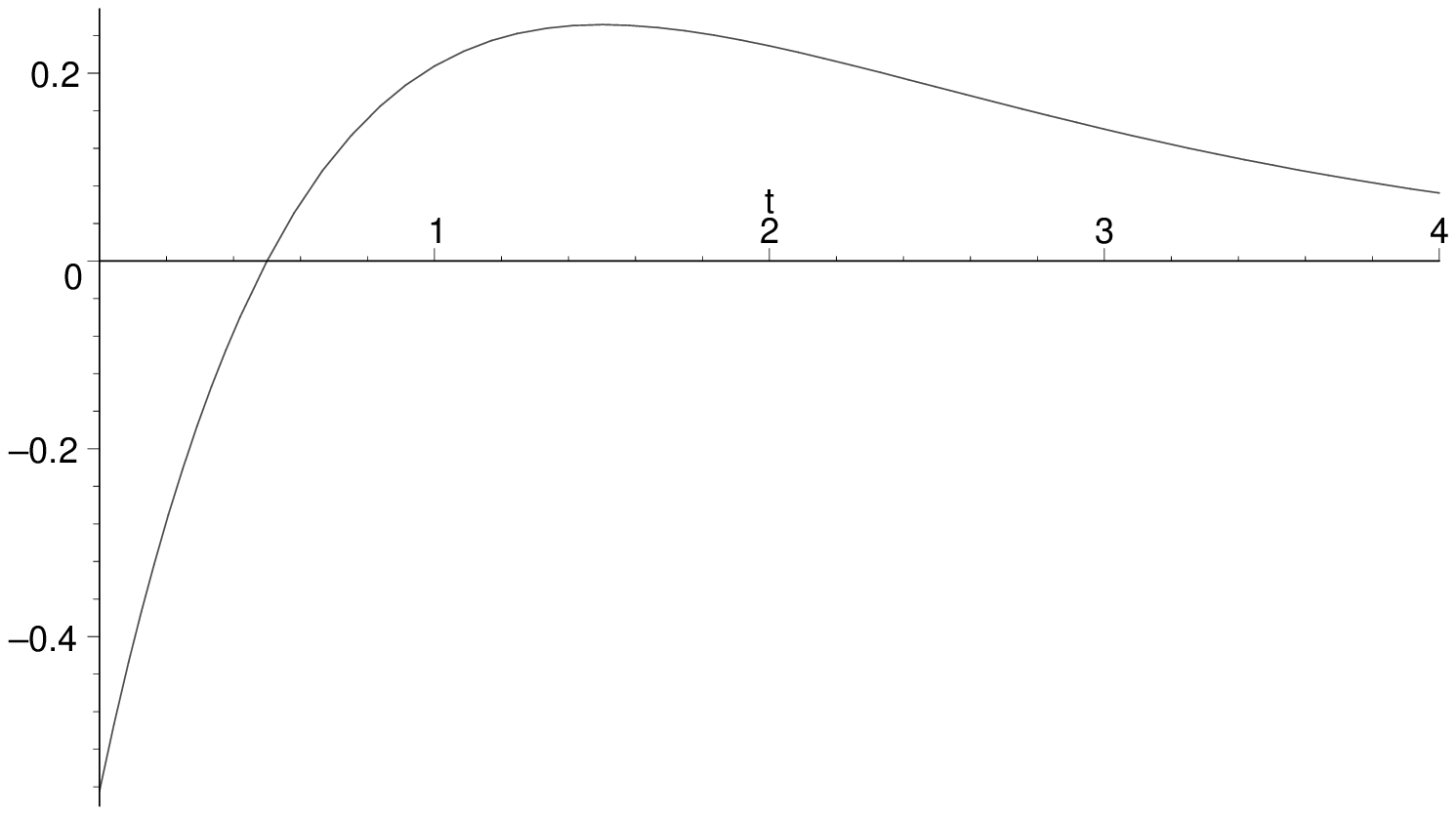}
\caption{Graphs of $\gamma_{0.5}$ and $\gamma_{1.5}$ for
$\alpha=0$}\label{gammas1}
\end{center}
\end{figure}

\begin{figure}[htbp]
\begin{center}
\includegraphics[width=6cm]{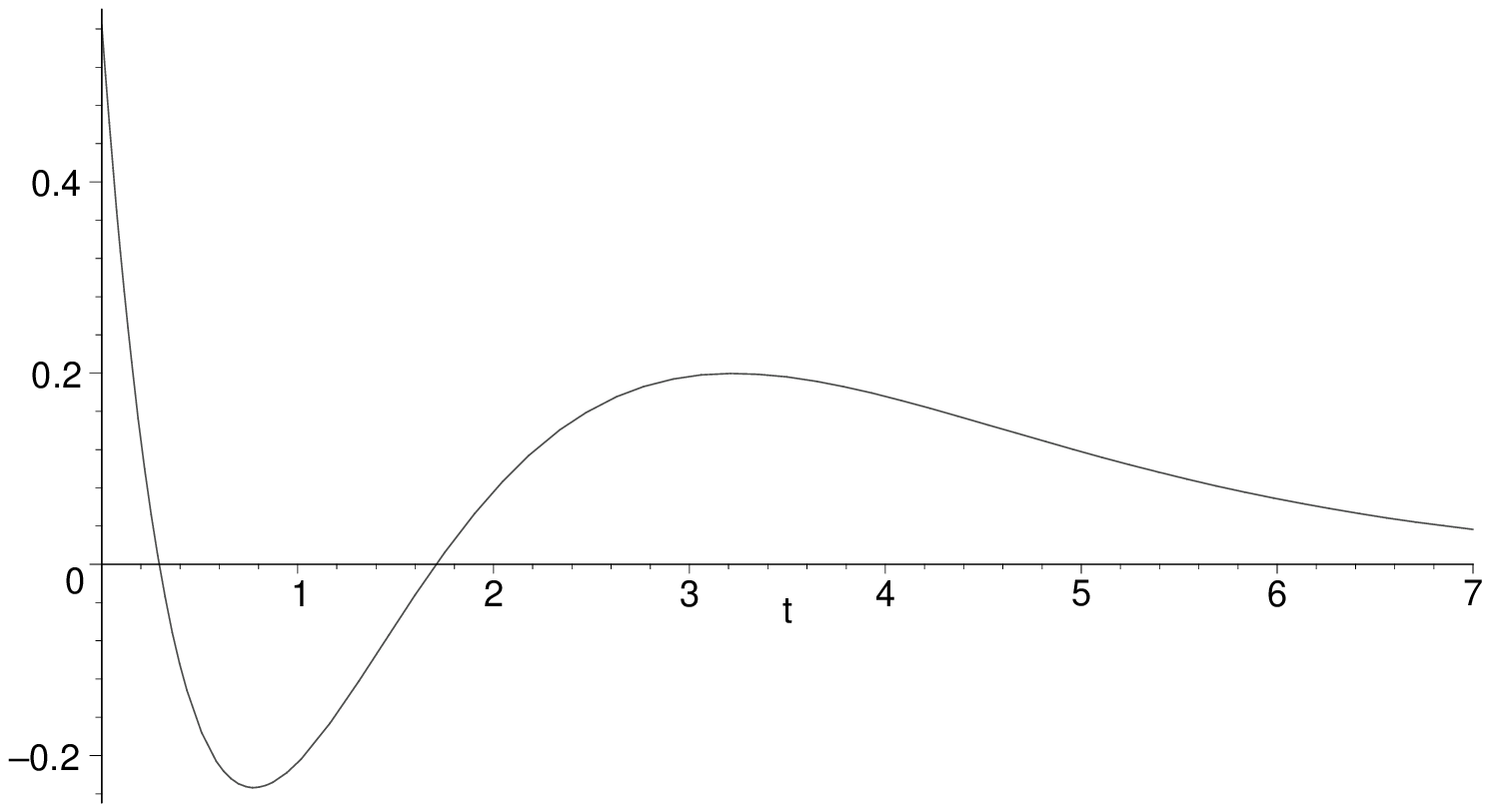}
\includegraphics[width=6cm]{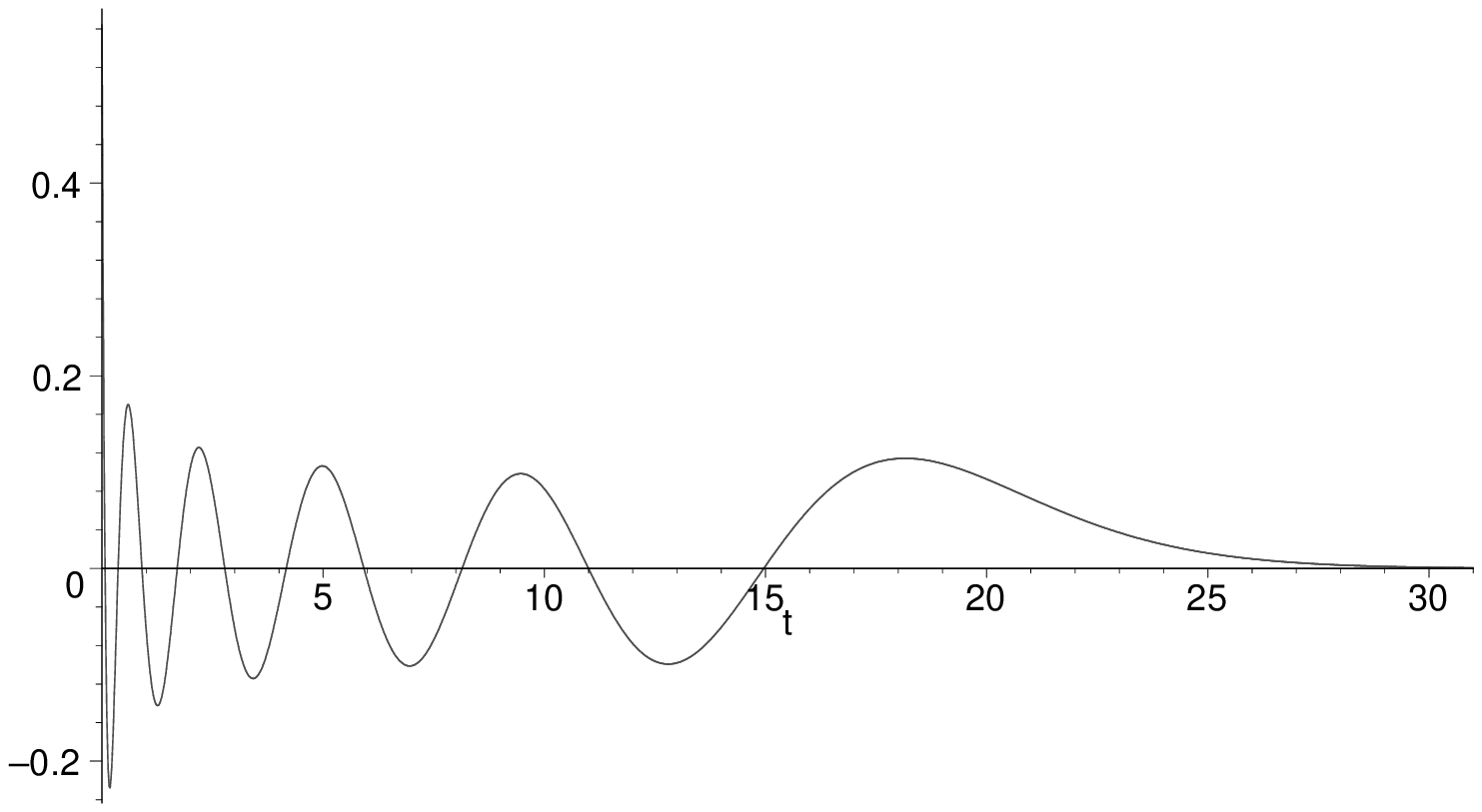}
\caption{Graphs of $\gamma_{2.5}$ and $\gamma_{10.5}$ for
$\alpha=0$}\label{gammas2}
\end{center}
\end{figure}

\begin{figure}[htbp]
\begin{center}
\includegraphics[width=6cm]{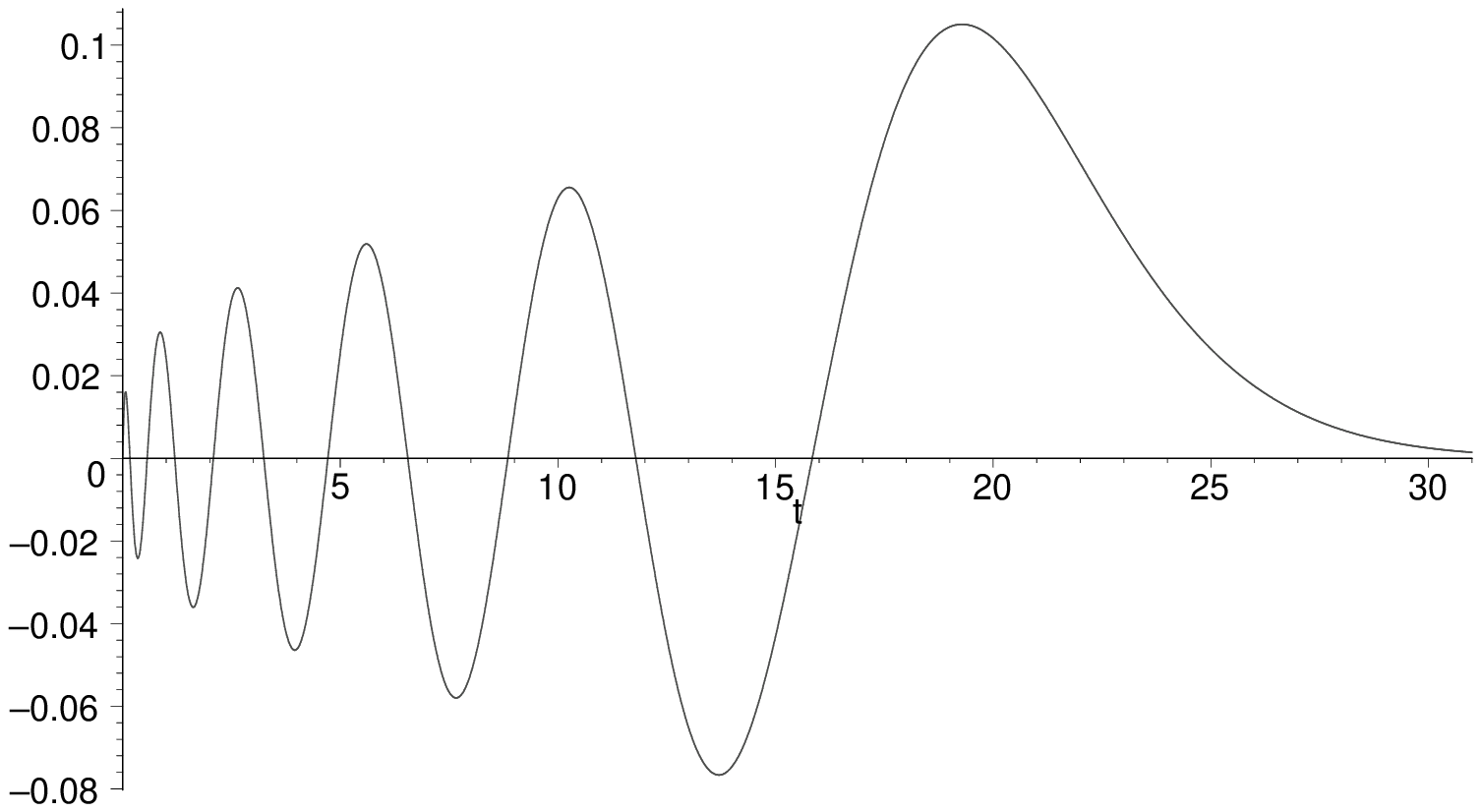}
\includegraphics[width=6cm]{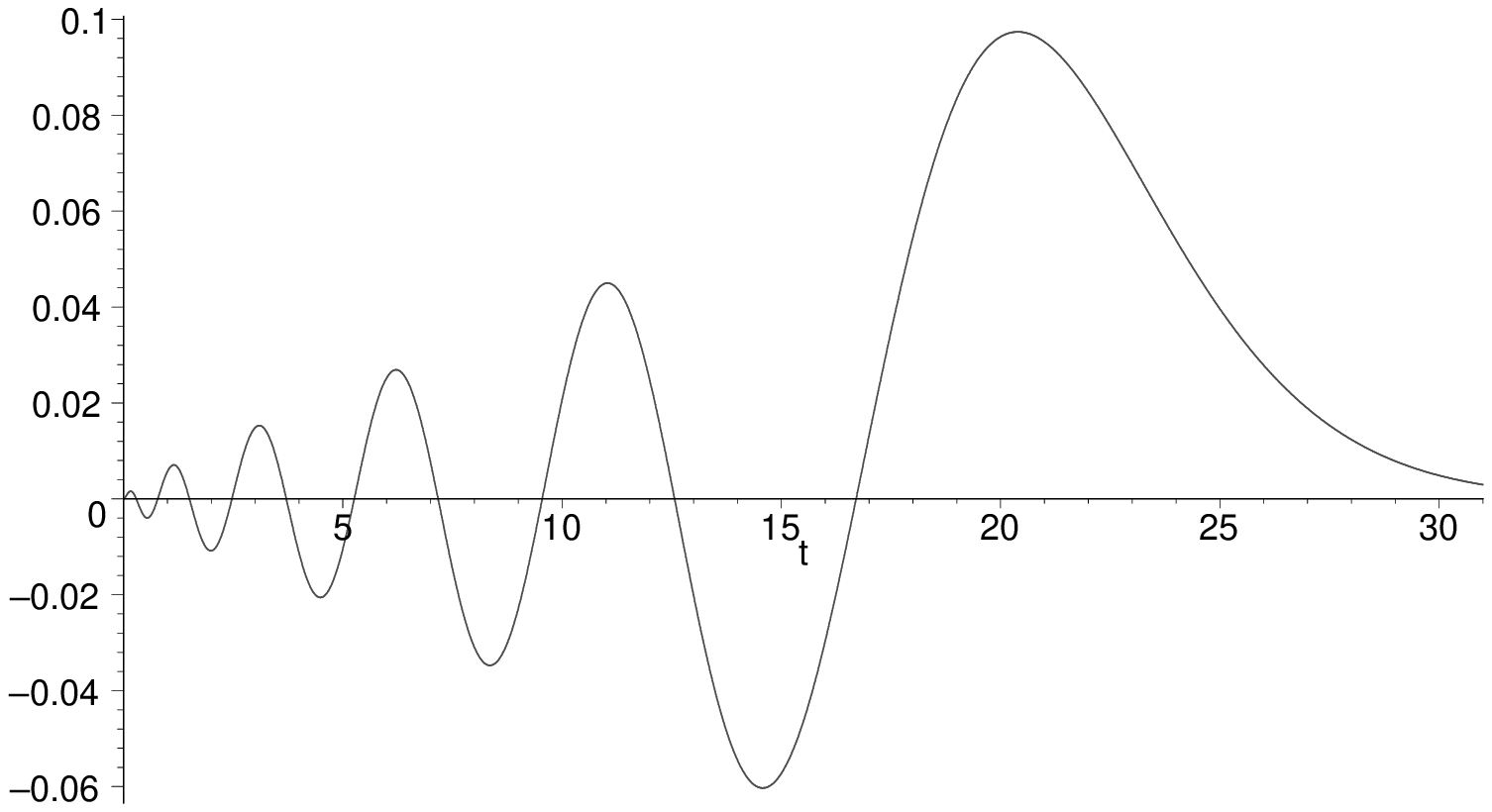}
\caption{Graphs of $\gamma_{11}$ for $\alpha=1$ and $\gamma_{11.5}$
$\alpha=2$}\label{gammas3}
\end{center}
\end{figure}

\begin{figure}[htbp]
\begin{center}
\includegraphics[width=6cm]{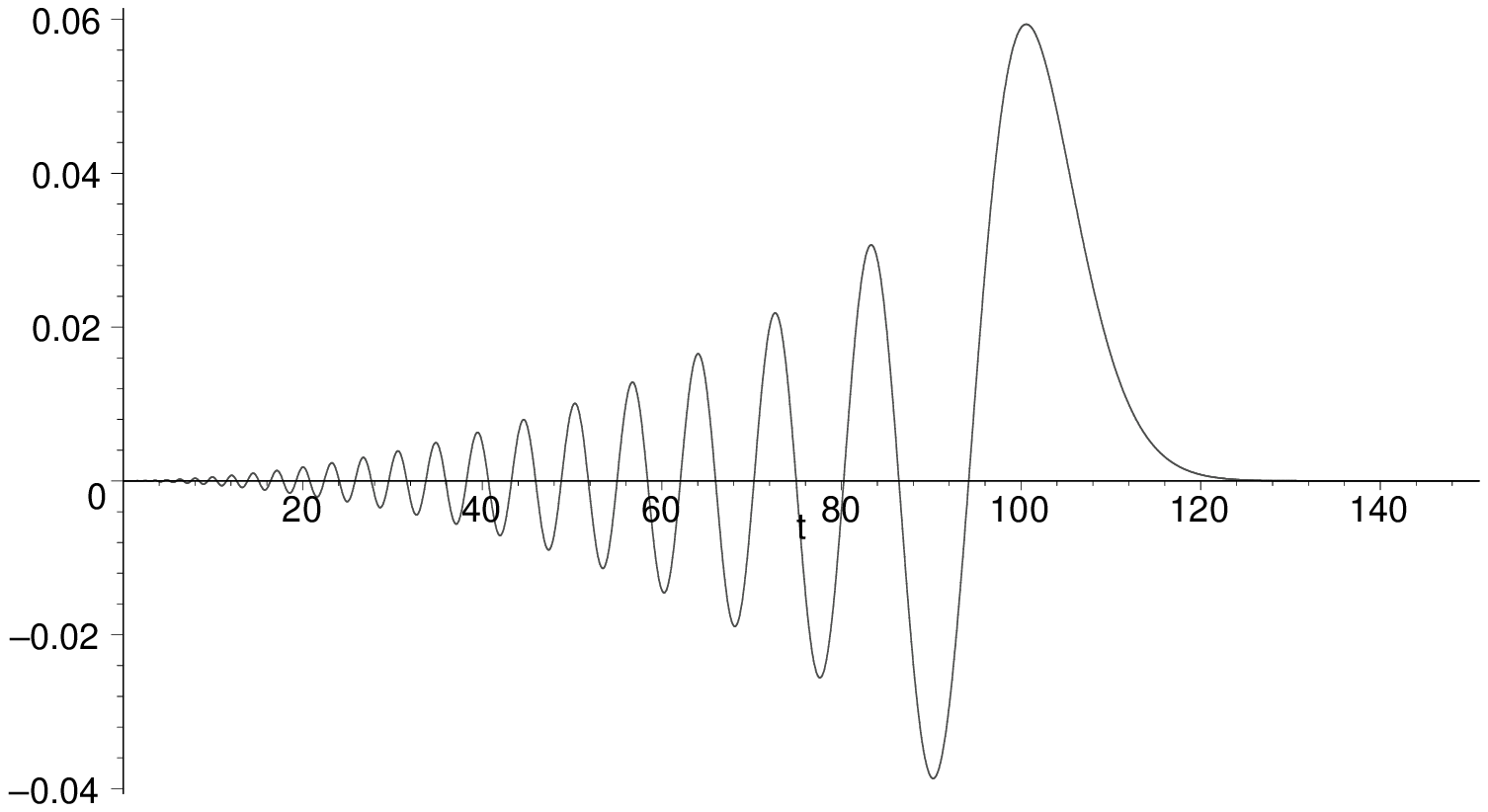}
\includegraphics[width=6cm]{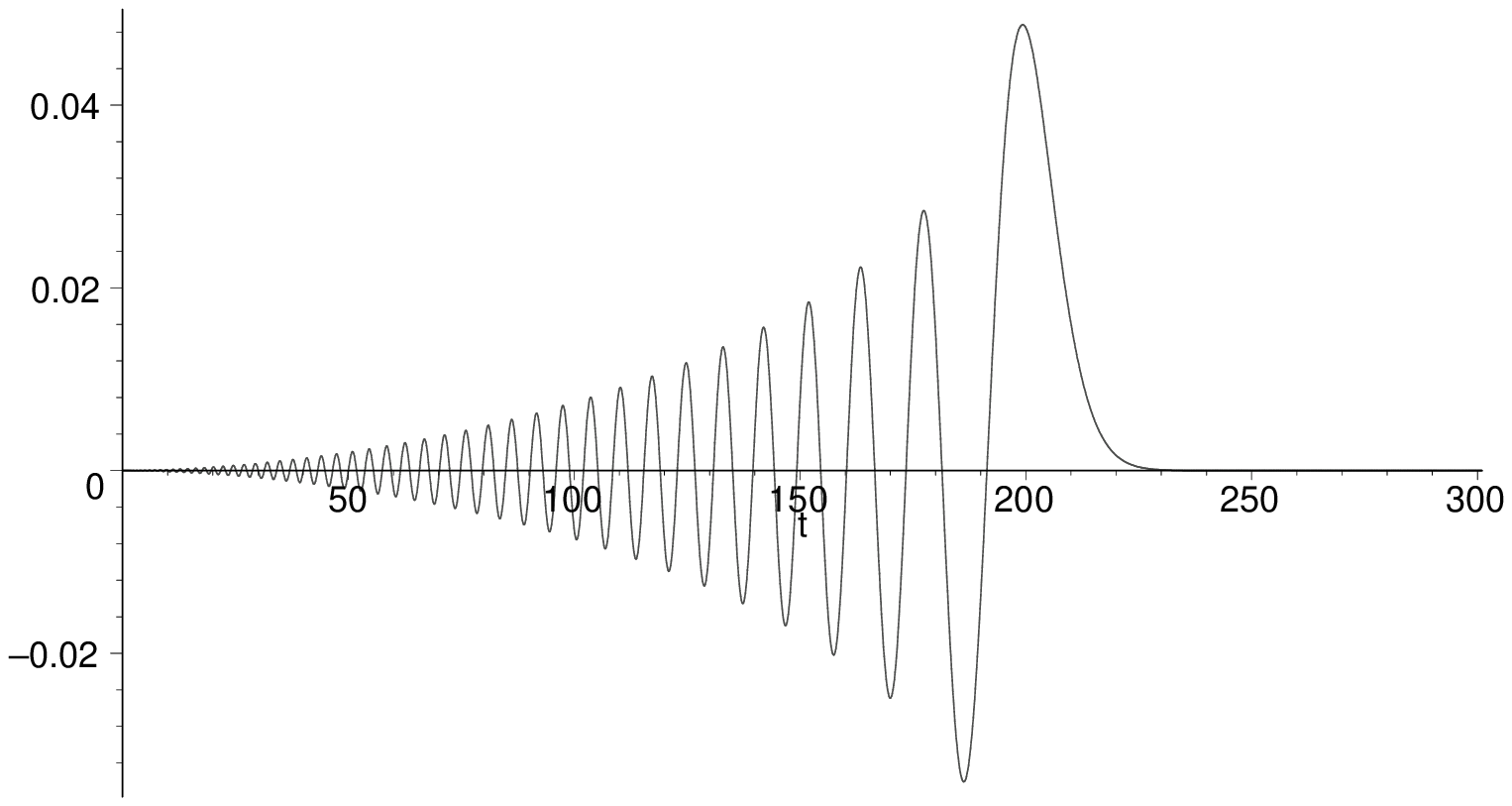}
\caption{Graphs of $\gamma_{52.5}$ and $\gamma_{102.5}$ for
$\alpha=4$}\label{gammas4}
\end{center}
\end{figure}

\section{Speed of approximation}\label{approx}

\subsection{The strategy}

We return for a moment to the general setting of
Section~\ref{setting}. We saw that a signal in ${\mathcal H}_{\varpi}$
can be written a the sum of a series $\sum_{n\in{\mathbb Z}}
a_n\,\gamma_n$, which converges in norm within this Hilbert space.

In practice, and especially in a sampling framework, it is
important both to secure convergence and to evaluate approximation
rates in various norms such as $L_{\infty}$ and $L^2$ in addition
to ${\mathcal H}_{\varpi}$. As we shall see, it is possible to
recover uniform or $L^2$ estimates from the ${\mathcal
H}_{\varpi}$-norm. Indeed, if the weight~$\varpi$ is bounded from
below by a positive constant, then the ${\mathcal H}_{\varpi}$
convergence implies the $L^2$ one and if $\int_{|\omega|>1}
\varpi(\omega)^{-1}\,\dif \omega < +\infty$, it implies uniform
convergence. Moreover, if $\int_{|\omega|>1}
\frac{|\omega|^{2k}}{\varpi(\omega)}\, \dif \omega < +\infty$, then
one has uniform convergence of the derivatives up to order~$k$.

As a consequence, estimates of the uniform norm or the $L^2$ norm
of $\mbox{err}_N = \sum_{|n|>N} a_n\gamma_n$ may be deduced from
estimates $s_N = \sum_{|n|>N} |a_n|^2$. But the coefficients $a_n$
are nothing but the Fourier coefficient of the warped function
${\mathcal T}^{\psi,\chi}(X)$ with respect to the orthonormal
basis $\{e_n\}$. Our strategy is thus simple: Choose for $\{e_n\}$
the trigonometric system, and then make use of classical
properties of the Fourier coefficients. For instance, if a $L^1$
function on the torus is H\"older~$h$ in the $L^1$ norm, one has
$a_n=\bo{|n|^{-h}}$.  In this way one is able to get the speed of
convergence of the series for functions~$X$ such that the warped
signal $\widehat{X}\circ\varphi(u)
\,\varphi'(u)^{\frac{\alpha+1}{2}}$ extends as a~$|I|$-periodic
$C^k$-function.

In order to achieve this program and to be able to give tractable
conditions on the signal under processing, one has to particularize
the warping function. The case studied in Section~\ref{arctan} gives
rise to explicit formulae, but it will appear that there are not
enough free parameters. This is why we introduce more general a family
of warping operators.

\subsection{Another family of warping operators}

Inspired by the case $\varphi(u)=\tan(u)$ treated above, we explore a
more general setting where $\psi$ and $\chi$ are given by
\begin{equation}
\psi(\xi)=c_1\int_0^\xi \frac{\dif v}{(1+v^2)^\beta}\text{\quad and\quad}
\chi = (\varphi')^{\frac{\alpha+1}{2}},
\end{equation}
where $\beta>1/2$ and $c_1^{-1} = \frac{2}{\pi}\int_{0}^{+\infty}
\frac{\dif v}{(1+v^2)^{\beta}}$ (as previously, $\varphi$ stands for the
function reciprocal to~$\psi$).

Note that if $\beta\leq\frac12$ could be of interest, but is not
studied in this work.

We have
\begin{equation}
\varphi^\prime\circ\psi(\omega) = 1/\psi'(\omega) =
c_1^{-1}(1+\omega^2)^\beta.
\end{equation}
As a consequence, ${\mathcal H_{\varpi}}$ is the usual Sobolev space
of order $\alpha\beta$
$${\mathcal H}^{\alpha\beta}=\left\{X\in\mathcal S^\prime({\mathbb
R})\text{ such that }\int_{\mathbb R}
|\widehat{X}(\omega)|^2(1+\omega^2)^{\alpha\beta}\dif f<+\infty\right\}.$$

It can be checked that, for any $k\geq 0$,
\begin{equation}\label{derest}
\frac{\dif^{k+1}\psi}{\dif\omega^{k+1}}(\omega)
\sim\frac{c_k}{\omega^{2\beta+k}}
\end{equation}
for large~$|\omega|$.

Moreover, when $\omega\to+\infty$,
\begin{equation}\label{est2}
\psi(\omega)=\frac{\pi}{2} -
\frac{c_3}{\omega^{2\beta-1}}+\lo{\frac{1}{\omega^{2\beta-1}}}.
\end{equation}
So, when $u\rightarrow\frac \pi 2$,
\begin{equation}\label{est3}
\varphi(u)\sim\left (\frac{\pi}{2}-u\right)^{-\frac{1}{2\beta-1}}.
\end{equation}
Finally, using (\ref{derest}) and (\ref{est3}), it is easily proved
that for all $k\geq 0$
\begin{equation}\label{est4}
\varphi^{(k)}(u)\sim\left
(\frac{\pi}{2}-u\right)^{-\frac{1}{2\beta-1}-k}.
\end{equation}

We are now ready to define a family of functional spaces for which
speed of convergence results are easily obtained.

\begin{definition}  
If $m \in {\mathbb N}$ and $\mu >0$, let ${\mathcal W}_m^{\mu}$ denote
the space of functions $X\in\mathcal S^\prime({\mathbb R})$ such that
\begin{enumerate}
\item $\widehat{X}$ is $m$ times differentiable
\item if $0\leq k\leq m$ then
$|\widehat{X}^{(k)}(\omega)|=\bo{|\omega|^{-(\mu+k)}}$ for large
$|\omega|$.
\end{enumerate}
\end{definition}

Condition 2 obviously entails that ${\mathcal W}_m^{\mu}$ is a subspace of
${\mathcal H}^{\mu-\frac{1}{2}-\varepsilon}$ for all
positive~$\varepsilon$. Condition~1 imposes some smoothness on
$\widehat{X}$. Functions in ${\mathcal W}_m^{\mu}$ are thus sufficiently
``well-behaved'' both in the time and frequency domains.

\begin{proposition}
Let $m$ be a positive integer and $\alpha$, $\beta$, and~$\mu$ be
nonnegative real numbers satisfying $\beta>\frac{1}{2}$,
$2\alpha\beta<2\mu-1$, and $X\in {\mathcal W}_m^\mu$. Let $\sum_{n\in{\mathbb
    Z}|} a_n\gamma_n$ be the $(\alpha,\beta)$-development of an
$X\in{\mathcal H}^{\alpha\beta}$. Then
\begin{itemize}
\item
\begin{equation*}\label{remainder}
\left\|\sum_{|n|> N} a_n\gamma_n\right\|_2 =
\bo{N^{\frac{1}{2}-\varkappa}},
\end{equation*}
\item if moreover $\alpha\beta>\frac{1}{2}$,
\begin{equation*}
\left\|\sum_{|n|\ge N} a_n\gamma_n\right\|_\infty =
\bo{N^{\frac{1}{2}-\varkappa}},
\end{equation*}
\end{itemize}
where~$\varkappa = \min\left\{m,
\frac{\mu-\alpha\beta-\beta}{2\beta-1}\right\}$.
\end{proposition}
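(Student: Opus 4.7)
The plan is to show $|a_n|=\bo{|n|^{-\varkappa}}$ for the Fourier coefficients of the warped signal $F(u)=\widehat{X}(\varphi(u))\,\varphi'(u)^{(\alpha+1)/2}$ on $I=(-\pi/2,\pi/2)$, and then transfer this decay to the two norms through the framework of Section~\ref{approx}. Since $\{\gamma_n\}$ is an orthonormal basis of ${\mathcal H}^{\alpha\beta}={\mathcal H}_\varpi$, Parseval yields $\bigl\|\sum_{|n|>N}a_n\gamma_n\bigr\|_{{\mathcal H}^{\alpha\beta}}^2=\sum_{|n|>N}|a_n|^2$. The weight $\varpi$ is comparable to $(1+\omega^2)^{\alpha\beta}\ge1$, whence $\|\cdot\|_2\le C\|\cdot\|_{{\mathcal H}^{\alpha\beta}}$; when moreover $\alpha\beta>1/2$, $\int(1+\omega^2)^{-\alpha\beta}\dif\omega<+\infty$ and a Cauchy--Schwarz applied to Fourier inversion yields $\|\cdot\|_\infty\le C\|\cdot\|_{{\mathcal H}^{\alpha\beta}}$ as well. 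Thus everything reduces to the decay of the $a_n$.

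The heart of the argument is the pointwise estimate
\[
|F^{(k)}(u)|=\bo{\Bigl(\tfrac{\pi}{2}-|u|\Bigr)^{s-k}}\quad\text{near }u=\pm\tfrac{\pi}{2},\qquad s:=\frac{\mu-\alpha\beta-\beta}{2\beta-1},\qquad 0\le k\le m.
\]
To prove it I would apply Leibniz to $F=(\widehat{X}\circ\varphi)\cdot(\varphi')^{(\alpha+1)/2}$ and Fa\`a~di~Bruno to each factor, using $|\widehat{X}^{(j)}(\omega)|=\bo{|\omega|^{-\mu-j}}$ together with the asymptotics \eqref{est3} and \eqref{est4}. A careful exponent count reveals a crucial cancellation: each extra derivative of $\widehat{X}$ inside the Fa\`a~di~Bruno sum contributes a gain $\varphi^{-1}\sim(\pi/2-|u|)^{1/(2\beta-1)}$ that is exactly killed by the extra factor $\varphi'\sim(\pi/2-|u|)^{-2\beta/(2\beta-1)}$ it must carry, so every term of the expansion has the same order and differs only by bounded combinatorial coefficients. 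This bookkeeping is the main technical obstacle; once done, the rest is routine.

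The hypothesis $2\mu-1>2\alpha\beta$ is exactly $s>-1/2$, guaranteeing $F\in L^2(I)$. For each integer $j<s$ the pointwise estimate forces $F^{(j)}(\pm\pi/2)=0$, so $K=\min\{m,\lfloor s\rfloor\}$ integrations by parts in $a_n\propto\int_{-\pi/2}^{\pi/2}F(u)\,\e{2\mi nu}\dif u$ annihilate every boundary term and leave $(2\mi n)^{-K}$ times the $n$-th Fourier coefficient of $F^{(K)}$. When $K=m\le s$ this coefficient is $\bo{1}$ by boundedness of $F^{(m)}$, giving $|a_n|=\bo{|n|^{-m}}$. When $K=\lfloor s\rfloor<m$, the bound on $F^{(K+1)}$ with exponent $s-K-1\in(-1,0)$ makes $F^{(K)}$ H\"older of order $s-K$, and the classical Fourier decay of H\"older functions sharpens the estimate to $|a_n|=\bo{|n|^{-s}}$. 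In either case $|a_n|=\bo{|n|^{-\varkappa}}$; summing the squares gives $\sum_{|n|>N}|a_n|^2=\bo{N^{1-2\varkappa}}$, and the two embeddings above produce the stated $L^2$ and $L^\infty$ rates.
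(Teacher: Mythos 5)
Your proposal is correct and follows essentially the same route as the paper: both rest on the boundary estimate $|F^{(k)}(u)|=O\bigl((\tfrac{\pi}{2}-|u|)^{s-k}\bigr)$ for the warped function with $s=\frac{\mu-\alpha\beta-\beta}{2\beta-1}$, deduce the coefficient decay $a_n=O(|n|^{-\varkappa})$, and then pass from the $\mathcal{H}^{\alpha\beta}$ tail bound $O(N^{\frac12-\varkappa})$ to the $L^2$ and (when $\alpha\beta>\tfrac12$) $L^\infty$ estimates using $\varpi\asymp(1+\omega^2)^{\alpha\beta}$. The only difference is in presentation: you derive the decay by hand (integration by parts plus a H\"older bound on $F^{(K)}$), whereas the paper observes that the warped function extends to a $\pi$-periodic function in Zygmund's Lipschitz class of order $\varkappa$ and cites the classical Fourier-coefficient estimate.
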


\begin{proof} 
Since $\alpha\beta < \mu-\frac{1}{2}$, we have $X\in {\mathcal
H}^{\alpha\beta}$. Consider the warped function
$g(u)=\widehat{X}\circ\varphi(u)\varphi'(u)^{\frac{\alpha+1}{2}}$. Due
to \ref{est3} and to the fact $X\in{\mathcal H}_m^\mu$,  for all
$k\leq m$, one has,
$$g^{(k)}(u)=\bo{\left (\frac \pi 2-|u|\right)^{\frac
{\mu-\beta(\alpha+1)} {2\beta-1}-k}}$$
when $u\rightarrow\pm\frac \pi 2$. This means that $g$ extends as a
$\pi$-periodic $\Lambda_\alpha$-function (see~\cite{Zygmund}). It
results the estimate $a_n = \bo{|n|^{-\varkappa}}$ for its Fourier
coefficients. This implies $\left\| \sum_{|n|>N}
a_n\gamma_n\right\|_{{\mathcal H}^{\alpha\beta}}^2 = \sum_{|n|>N}
|a_n|^2 = \bo{N^{1-2\varkappa}}$, from which the conclusions easily
follow..
\end{proof}

\begin{theorem}\label{speed}
Let $X$ belong to ${\mathcal W}_m^\mu$. Then one has the following facts.
\begin{enumerate}
\item If $\mu>1$, there exist $\alpha$ and $\beta$ such that $||X -
X_N||_\infty = \bo{\frac{1}{N^{m-\frac{1}{2}}}}$.
\item If $\mu>1/2$, there exist $\alpha$ and $\beta$ such that $||X -
X_N||_2 = \bo{\frac{1}{N^{m-\frac{1}{2}}}}$.
\end{enumerate}
Where, in both cases, $X_N$ stands for the partial sum $\sum_{|n|\le
N} a_n\gamma_n$
of the corresponding ($\alpha,\beta$)-warping expansion of~$X$.
\end{theorem}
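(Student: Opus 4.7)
The plan is to reduce the theorem to the preceding Proposition by choosing the parameters $\alpha$ and $\beta$ appropriately. That proposition already delivers $\|\sum_{|n|>N} a_n\gamma_n\|_2 = \mathcal{O}(N^{1/2-\varkappa})$ and, when $\alpha\beta>1/2$, the same bound in the sup-norm, where $\varkappa = \min\{m,(\mu-\alpha\beta-\beta)/(2\beta-1)\}$. So to obtain the announced rate $N^{-(m-1/2)}$, it suffices to pick $(\alpha,\beta)$ so that $\varkappa = m$, which amounts to the single inequality
\begin{equation*}
\mu-\alpha\beta-\beta \;\ge\; m(2\beta-1), \qquad \text{i.e.,}\qquad \beta(\alpha+1+2m)\;\le\;\mu+m,
\end{equation*}
together with the standing constraints $\beta>1/2$ and $\alpha\beta<\mu-1/2$ of the proposition (plus $\alpha\beta>1/2$ for the $L^\infty$ statement).

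For the $L^2$ statement (item~2) I would simply take $\alpha=0$. The condition then becomes $\beta(1+2m)\le\mu+m$, i.e., $\beta\le(\mu+m)/(1+2m)$. The requirement $\beta>1/2$ is compatible with this exactly when $(\mu+m)/(1+2m)>1/2$, which is equivalent to $\mu>1/2$; any $\beta\in\bigl(1/2,(\mu+m)/(1+2m)\bigr]$ works, and $\alpha\beta = 0 < \mu-1/2$ is automatic. Applying the previous proposition with these values yields the claimed $\mathcal{O}(N^{-(m-1/2)})$ bound.

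For the $L^\infty$ statement (item~1) I need in addition $\alpha\beta>1/2$. Set $c=\alpha\beta$ and pick any $c$ with $1/2<c<\mu-1/2$, which is possible precisely because $\mu>1$. The optimization condition becomes $\beta(1+2m)\le \mu+m-c$, so I take $\beta$ anywhere in the interval $\bigl(1/2,(\mu+m-c)/(1+2m)\bigr]$ — this interval is nonempty iff $c<\mu-1/2$, which holds by construction — and finally set $\alpha=c/\beta$. All hypotheses of the preceding proposition ($\beta>1/2$, $\alpha\beta<\mu-1/2$, $\alpha\beta>1/2$, and the relation defining $\varkappa=m$) are satisfied, giving $\|X-X_N\|_\infty=\mathcal{O}(N^{-(m-1/2)})$.

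There is no real obstacle: the substantive work lies entirely in the preceding proposition (regularity of the warped function $g$, periodic extension, and Fourier coefficient decay). The present theorem is a parameter-selection corollary, and the only thing to verify carefully is that the feasible region for $(\alpha,\beta)$ defined by the four inequalities is nonempty under the stated hypotheses on $\mu$; the computation above shows that $\mu>1/2$ suffices in the $L^2$ case and $\mu>1$ in the $L^\infty$ case, exactly as announced.
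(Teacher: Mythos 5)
Your proposal is correct and takes exactly the paper's route: the paper's own proof is a one-line reduction stating that it suffices to find $\alpha$ and $\beta$ satisfying the constraints of the preceding proposition, which is precisely what you do. Your explicit feasibility check (choosing $\alpha=0$, $\beta\in\bigl(\tfrac12,\tfrac{\mu+m}{1+2m}\bigr]$ for the $L^2$ case, and $\alpha\beta=c\in\bigl(\tfrac12,\mu-\tfrac12\bigr)$ with $\beta\le\tfrac{\mu+m-c}{1+2m}$ for the $L^\infty$ case, so that $\varkappa=m$) simply supplies the verification the paper leaves to the reader, and it is accurate.
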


\begin{proof}
If suffices to check that, given $m$ and~$\mu$ fulfilling the
hypotheses, one can find $\alpha$ and~$\beta$ satisfying the
constraints in Proposition~\ref{remainder}.
\end{proof}

\begin{figure}[htbp]
\begin{center}
\includegraphics[width=11cm]{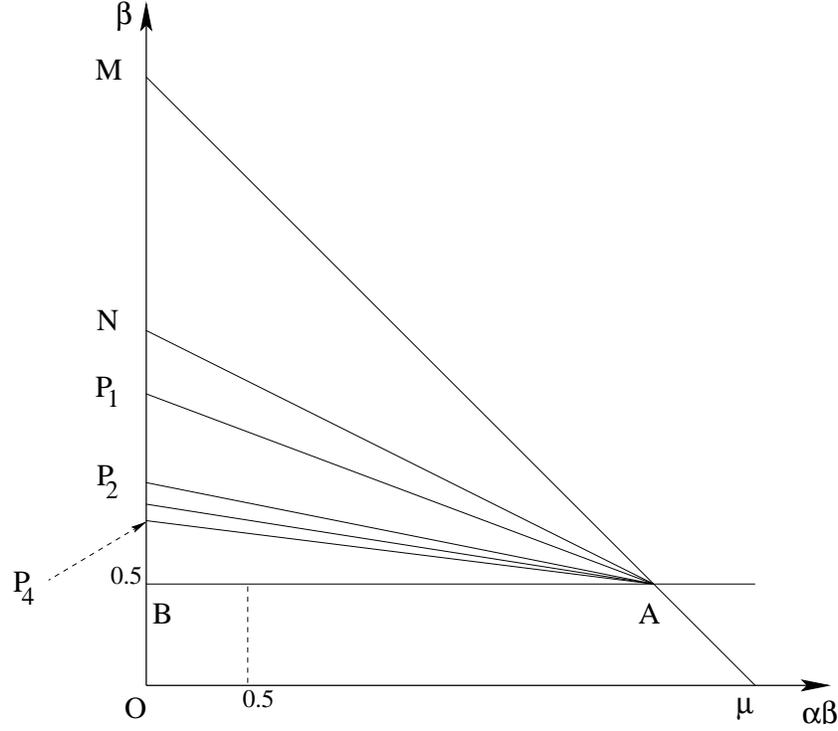}
\caption{Explanation of theorem~\ref{speed}.}\label{triangle}
\end{center}
\end{figure}

The following corollary shows that functions with very regular Fourier
transforms can be approximated with any prescribed polynomial speed.

\begin{corollary}\label{convinf}
Let $X$ belong to ${\mathcal W}_\infty^\mu$, with $\mu>1$ (rep.\ $\mu>1/2$).
Then, for all $\gamma >0$, there exists a warping operator such that
$||X - X_N||_\infty = \lo{N^{-\gamma}}$ (resp.\ $||X - X_N||_2 =
\lo{N^{-\gamma}}$).
\end{corollary}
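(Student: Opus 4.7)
The plan is to derive this corollary as a direct consequence of Theorem~\ref{speed}, using the fact that $\mathcal{W}_\infty^\mu = \bigcap_{m\ge 0}\mathcal{W}_m^\mu$. So for any fixed exponent $m$, the hypothesis $X\in\mathcal{W}_\infty^\mu$ guarantees $X\in\mathcal{W}_m^\mu$, and we may invoke the theorem with any $m$ we please.

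Concretely, given $\gamma>0$, I would first pick an integer $m$ with $m-\tfrac12 > \gamma$; this is possible since $m$ ranges over all nonnegative integers. In the uniform case ($\mu>1$), Theorem~\ref{speed}(1) applied with this choice of $m$ and the given $\mu$ produces parameters $\alpha,\beta$ and a partial sum $X_N$ of the associated $(\alpha,\beta)$-warping expansion such that
\begin{equation*}
\|X-X_N\|_\infty = \bo{N^{-(m-1/2)}}.
\end{equation*}
Since $m-\tfrac12 > \gamma$ by construction, we have $N^{-(m-1/2)} = \lo{N^{-\gamma}}$, and the conclusion follows. The $L^2$ case is identical, using Theorem~\ref{speed}(2) under the weaker hypothesis $\mu > 1/2$.

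There is no real obstacle here: the only thing to notice is that the warping operator depends on $\gamma$, through the choice of $m$ and the corresponding $(\alpha,\beta)$ furnished by Theorem~\ref{speed}; but this dependence is permitted by the statement, which only claims the existence of some warping operator for each prescribed $\gamma$. The fact that $\mathcal{W}_\infty^\mu$ has, by definition, Fourier transforms smooth enough to supply any finite number of derivative bounds is what makes the iteration over $m$ possible.
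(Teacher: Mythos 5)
Your proof is correct and is exactly the argument the paper intends: the corollary is left as an immediate consequence of Theorem~\ref{speed}, obtained by noting that $X\in{\mathcal W}_\infty^\mu$ lies in ${\mathcal W}_m^\mu$ for every $m$ and choosing $m-\frac12>\gamma$, with the admissible dependence of $(\alpha,\beta)$ (hence of the warping operator) on $\gamma$ handled just as you describe. No gaps; nothing further is needed.
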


\subsection{Comparison with WKS sampling}

\subsubsection*{Introductory remarks}

In this section, we deal with the practical aspects of our warping
method. We showed that, provided $X$ belongs to a given (large) class
of signals, excellent approximations of $X$ can be obtained by keeping
a finite number of terms in the sum $\sum_{n \in {\mathbb Z}} \langle
X,\gamma_n\rangle_{H_{\varpi}} \gamma_n$.

Now, we are going to compare the warping with the classical sampling
method based on low-pass filtering.

Assume we are given a analog signal $X$, which needs to be
digitized for purposes of storing, transmitting, or digital
processing. A crude application of Shannon sampling consists in the
following steps:
\begin{enumerate}
\item to fix a sampling frequency $\omega_0$,
\item to low-pass $X$, i.e., to compute the convolution $X_l = X*g$,
  where $\widehat{g} = {\bf 1}_{[-\omega_0/2\pi,\omega_0/2\pi]}$,
\item to approximate $X(t)$ by $\widetilde{X}_l(t) = \sum_{|n| \le
N_s} X_l(n) \sinc
\frac{t-\frac{2\pi n}{\omega_0}}{\frac{2\pi}{\omega_0}}$.
\end{enumerate}

This procedure generates two kinds of errors. The first one is due to
low-pass filtering. The second one arises in step~3, because of the
truncation of the series. Such a truncation is  unavoidable since
obviously one can access only finitely many samples in practice.
\smallskip

The warping procedure for digitizing signals proceeds as follows:

\begin{enumerate}
\item to fix a natural number $N_w$,
\item to compute the scalar products $\langle X,\gamma_k
\rangle_{H_{\varpi}}$ for $n = -N_w \ldots N_w$,
\item to approximate $X(t)$ by $\sum_{n = -N_w}^{N_w} \langle
X,\gamma_n\rangle_{H_{\varpi}} \gamma_n$.
\end{enumerate}

Again two kinds of errors are made. The first one lies in the
estimation of the scalar products. As previously, the second one is a
consequence of keeping finitely many terms in the sum in step 3. In
this work, we shall assume that the first kind of error is negligible
with respect to the second one. Indeed, when $\varphi = \tan$, we
obtained explicit expressions for the $\gamma_n$. In this case, it is
not too hard to devise a sufficiently precise numerical approximation
scheme using these expressions. In more general cases, the scalar
products may be harder to compute. We plan to investigate numerical
quadrature schemes for solving this important problem in a forthcoming
work.
\smallskip

If the WKS and warping procedures are to be compared, it is fair
to set $N_s=N_w$, henceforth denoted $N$. One then needs to select
a value for $\omega_0$. Obviously, in the case where $X$ is not
bandlimited, one would like to take $\omega_0$ as large as
possible. However, in practice, one passes from the ``time
domain'' to the ``Fourier domain'' through the Fast Fourier
Transform. As a consequence, it does not make sense to choose
$\omega_0$ larger than~$N$.

\subsubsection{Worst case comparison}

It is not easy to compare the error made in approximating an
arbitrary function through WKS sampling and warping. In order to
obtain tractable results, we shall rather compare the worst case
approximations in both situations.

More precisely, for a function $X$ in ${\mathcal W}_m^\mu$, the
$L^\infty({\mathbb R})$ error when considering $N$ terms in the
cardinal series and setting $\omega_0=N$ is of order not larger
than
$$\int_N^{+\infty}\frac{\dif \omega}{\omega^\mu}=\frac 1 {N^{\mu-1}}$$
whereas the $L^2({\mathbb R})$ error is of order not larger than
$$\frac 1 {N^{\mu-\frac12}}.$$

These errors only depend on $\mu$, which is in contrast to what
happens for the errors in the warping method.

Let us introduce the ratios of the worst cases errors
corresponding to the warping and WKS sampling:
$$\rho_\infty(N)=\frac{N^{\varkappa-\frac12}}{N^{\mu-1}} =
N^{\varkappa+\frac{1}{2}-\mu}$$
$$\rho_2(N)=\frac{N^{\varkappa-\frac12}}{N^{\mu-\frac 1 2}} =
N^{\varkappa-\mu}.$$

The following proposition gives conditions on $\alpha, \beta, m$,
and $\mu$ under which warping is preferable to low-pass filtering,
i.e., yields faster convergence rates.

\begin{proposition}[Comparison in $L_2$ and $L_{\infty}$]\label{comp}
Let $m$ be a positive integer and $\alpha$, $\beta$, and~$\mu$ be
nonnegative real numbers satisfying $\beta>\frac{1}{2}$,
$0<\alpha\beta<\mu-\frac12$, $2\mu(1-\beta)>\alpha\beta+\beta$,
and $m>\mu$. Then, the ratio of the worst cases errors for a
function in ${\mathcal W}_m^\mu$ sampled through the WKS theorem
and through the warping operator with parameters ($\alpha, \beta$)
are such that:
\begin{itemize}
\item $\lim_{N\to\infty} \rho_2 = +\infty$, and,
\item if $\alpha\beta>\frac{1}{2}$, $\lim_{N\to\infty}
\rho_\infty = +\infty$.
\end{itemize}
\end{proposition}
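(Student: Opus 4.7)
The plan is to reduce the statement to an exercise in bookkeeping: express the two ratios in terms of the exponent $\varkappa = \min\{m,\frac{\mu-\alpha\beta-\beta}{2\beta-1}\}$ furnished by the preceding proposition, and then check that the hypotheses on $(\alpha,\beta,m,\mu)$ force $\varkappa>\mu$ and (in the uniform case) $\varkappa>\mu-\tfrac12$.

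First I would verify that the preceding proposition applies: the hypothesis $0<\alpha\beta<\mu-\tfrac12$ is exactly the condition $2\alpha\beta<2\mu-1$ required there, and $\beta>\tfrac12$ is assumed. This yields $\|X-X_N\|_2=\bo{N^{1/2-\varkappa}}$, and, under the additional assumption $\alpha\beta>\tfrac12$, also $\|X-X_N\|_\infty=\bo{N^{1/2-\varkappa}}$. Dividing by the WKS worst-case rates $N^{-(\mu-1/2)}$ and $N^{-(\mu-1)}$ already gives, up to constants, $\rho_2(N)=N^{\varkappa-\mu}$ and $\rho_\infty(N)=N^{\varkappa+1/2-\mu}$, so both claims reduce to positivity of the exponents.

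For the $L^2$ statement, I need both entries of the minimum defining $\varkappa$ to exceed $\mu$. The bound on the first entry is the assumption $m>\mu$. For the second, since $2\beta-1>0$, the inequality $\frac{\mu-\alpha\beta-\beta}{2\beta-1}>\mu$ is equivalent to $\mu-\alpha\beta-\beta>\mu(2\beta-1)$, i.e.\ $2\mu(1-\beta)>\alpha\beta+\beta$, which is exactly the standing hypothesis. Hence $\varkappa>\mu$ and $\rho_2(N)\to+\infty$.

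For the $L^\infty$ statement, I would need $\varkappa>\mu-\tfrac12$. Again $m>\mu>\mu-\tfrac12$, and the condition $\frac{\mu-\alpha\beta-\beta}{2\beta-1}>\mu-\tfrac12$ rewrites, after the same manipulation, as $2\mu(1-\beta)>\alpha\beta+\tfrac12$. Since $\beta>\tfrac12$, one has $\alpha\beta+\beta>\alpha\beta+\tfrac12$, so the stronger standing inequality $2\mu(1-\beta)>\alpha\beta+\beta$ already implies this; the extra clause $\alpha\beta>\tfrac12$ is used only to legitimize the uniform remainder estimate invoked at the first step. Thus $\varkappa>\mu-\tfrac12$ and $\rho_\infty(N)\to+\infty$. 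The main (and essentially only) delicate point is keeping track of which standing inequality produces which strict exponent gap; no genuinely new analytic argument is required beyond the preceding proposition.
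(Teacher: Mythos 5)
Your proposal is correct and follows essentially the same route as the paper, whose proof consists of the single remark that the result ``results from'' the preceding speed-of-approximation proposition; your computation that the hypotheses $m>\mu$ and $2\mu(1-\beta)>\alpha\beta+\beta$ (with $2\beta-1>0$) force $\varkappa=\min\bigl\{m,\frac{\mu-\alpha\beta-\beta}{2\beta-1}\bigr\}>\mu$, hence also $\varkappa>\mu-\frac12$, is precisely the bookkeeping that reduction requires, with $\alpha\beta>\frac12$ entering only to justify the uniform remainder estimate. The only nitpick is the phrase ``dividing by the WKS worst-case rates,'' which taken literally would give the reciprocals of $\rho_2$ and $\rho_\infty$; the formulas you actually use, $\rho_2=N^{\varkappa-\mu}$ and $\rho_\infty=N^{\varkappa+\frac12-\mu}$, agree with the paper's definitions, so this is purely a wording slip.
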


Note that the conditions set on the parameters imply that $\mu >1$
for the $L_\infty$ error, and that $\mu > \frac{1}{2}$ for the
$L_2$ error. Accordingly, $m$ must be at least 2 in the $L_\infty$
case, and at least 1 in the $L_2$ case: In order for the warping
procedure to be better than the WKS one in, e.g., the $L_2$ sense,
$\widehat{X}$ must thus be at least once differentiable with a
derivative decaying faster than $\frac{1}{\omega^{3/2}}$. For
instance, functions in ${\mathcal W}_1^{1/4}$ are advantageously
sampled through warping (in the $L_2$ sense) with, e.g.,
$\beta=\frac{11}{20} - \varepsilon, \alpha \beta  = \frac{1}{8}, 0
< \varepsilon < \frac{1}{20}$.

\begin{proof}
This results from Proposition~\ref{remainder}.
\end{proof}

\section{Numerical experiments}\label{numexp}

We now present results of numerical experiments that illustrate
the behavior of the warping method for approximating certain
functions. We compare it to the quality of approximation obtained
using the classical WKS sampling framework.

\subsection{Methodology}

We give ourselves a series of functions to be analyzed, and then
reconstructed using the WKS sampling algorithm, and our method. We
focus on the case $\alpha=\beta=1$, i.e., $\psi=\arctan$ and the
space of approximation is ${\mathcal H}^1$.

The full procedure is defined as:
\begin{itemize}
\item In the classical sampling approach, each signal $X(t)$ is low-pass filtered at frequency $2\pi N$,
then sampled at the Nyquist frequency (the sampling pace is thus
$\frac 1 N$). Finally, it is reconstructed as $\widetilde X_l(t)$
using the cardinal series. In our examples, all signals compactly
supported (i.e., time limited) on $[0,10]$, and therefore the
summation is finite (there are $10N$ terms).
\item For our algorithm, each signal is decomposed into the system
  $\{\gamma_n\}_{0\leq n\leq N-1}$ and is then reconstructed
  as $$\widetilde X_w(t)=\sum_{n=0}^{N-1}c_n\gamma_n(t)$$
\item In each case we compute the uniform error, the $L^2$ error and
  the $H_1$ error.  All errors are relative errors:
$$E=\frac{\|X-\widetilde X\|}{\|X\|}$$
\item This process is repeated for several values of $N$, typically
  $N=1,...,78$.
\end{itemize}
This procedure allows us to study and compare the quality of
approximation of both methods in terms of different metrics. Notice
that in each case, the number of terms retained to reconstruct $X$ is
$N$.  This is the cost of compression in terms of information
quantity.

\subsection{Practical implementation}

Concerning the WKS sampling, functions must be low-pass filtered.
From a practical point of view, two cases may be distinguished:
\begin{itemize}
\item Either the filtered version of $X$ has an analytic form, as is
  the case for the Riemann function $$R_s(t)=\sum_{k\geq
    0}\frac{\sin(n^s t)}{n^s}$$ In this case, bandlimiting is
  equivalent to truncating the sum, and the samples can be expressed
  in a closed form:
$$\widetilde X_l(t_k)=\sum_{0\leq k^s\leq N}\frac{\sin(n^s t_k)}{n^s}$$
The expression of the samples is then used to compute the cardinal
series.
\item Or there is no analytic form for the low-pass filtered version
  of $X$. In this case the samples must be approximated using a
  Discrete Fourier Transform (DFT). We must thus first simulate a
  "continuous" (i.e., non-sampled) version of the signal. This is
  achieved by means of a large sampling rate.  More precisely, we
  first discretize the signal on $[0,10]$ with a regular grid of
  $10^5$ points. We then compute the DFT (via a Fast Fourier
  Transform, FFT) of this approximate "continuous" signal, low-pass
  filtered the Fourier transform and then we applied an inverse DFT.
\end{itemize}

As regards the implementation of the warping method, two types of
quantities must be computed. First the functions $\gamma_k$, for
$0\leq k\leq N-1$, and then the coefficients $c_k=\langle \widetilde
\gamma_k,f\rangle$. The latter are expressed as a duality product
between the signal $X$ to be analyzed and the dual functions
$\widetilde \gamma_k$.

We proceeded as follows: the functions $\gamma_k$ were
pre-computed under Maple in a discretized form (sampled on a
regular grid of gridspace equal to $10^{-3}$) using the analytic
form given in section 3 for $\alpha=1$. For the dual functions
$\widetilde \gamma_k$, it is a bit more complicated: they
correspond to $\alpha=-1$ and their expression is given in section
3 as a sum of a Dirac mass  and an analytic part. These two
components must be separated. We pre-computed the analytic part
under Maple (like for the $\gamma_k$) and $c_k$ was evaluated as
the sum of the inner product of $f$ with the analytic part, plus a
coefficient times $X(0)$ (corresponding to the duality product
with the Dirac mass).

It is to be noted that the error can also be evaluated in other
Sobolev metrics, say in $H_\nu$, where $\nu>0$. In this case, one has
to work in the Fourier domain and therefore use an FFT.

\subsection{Results}

We consider four examples of functions with increasing complexity.
In each case, we display four curves. The first one is a
superposition of the original function with the approximations
yielded by both the WKS and the warping methods with a single,
large, value of $N$. The three other graphs describe the
evolutions of the $L^{\infty}, L^2$ and $H^1$ errors of both
approximation methods as a function of $N$.

\subsubsection*{Cauchy Probability Distribution}
Our first test deals with a smooth function, namely the Cauchy
probability distribution $\frac 1 {1+t^2}$. Since we consider the
restriction of this function to $[0,10]$, we however introduce a
discontinuity at the endpoints, i.e., the values at 0 and 10
differ. The warping method appears extremely efficient when $N$
remains moderate (<30). Even for large values of $N$, this method
remains superior to the WKS algorithm as the errors for the latter
is larger (see figure \ref{Cauchy}). One reason for this is the
above mentioned discontinuity: As a consequence, the WKS
approximation is quite bad around the boundaries of the domain
$[0,10]$, because in the sum many terms are missing. On the
contrary, if one takes enough terms in the sum for the warping,
the approximation is excellent around 0, and quite good at 10.

\begin{figure}[htbp]
\begin{center}
\includegraphics[width=6.2cm]{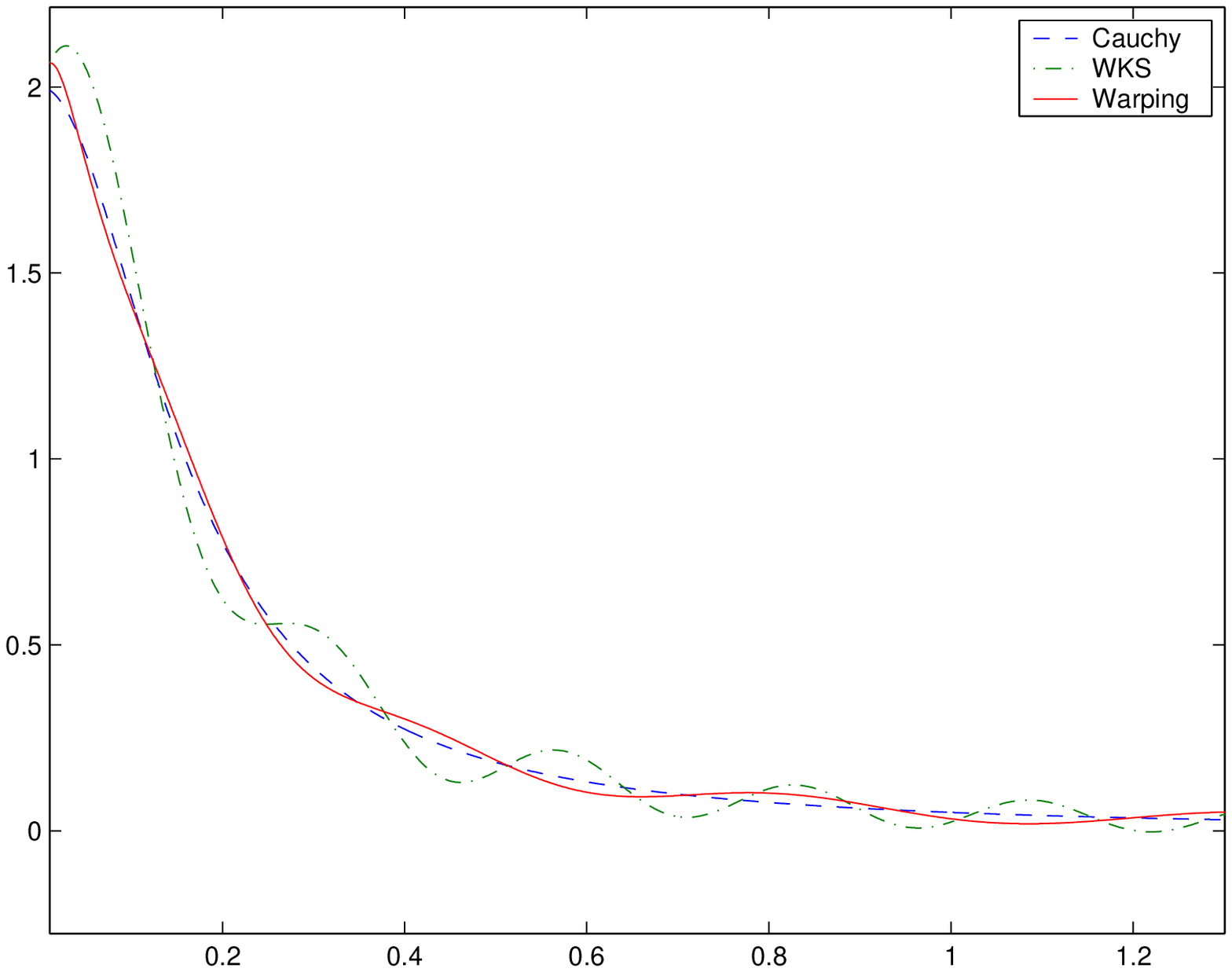}
\includegraphics[width=6.2cm]{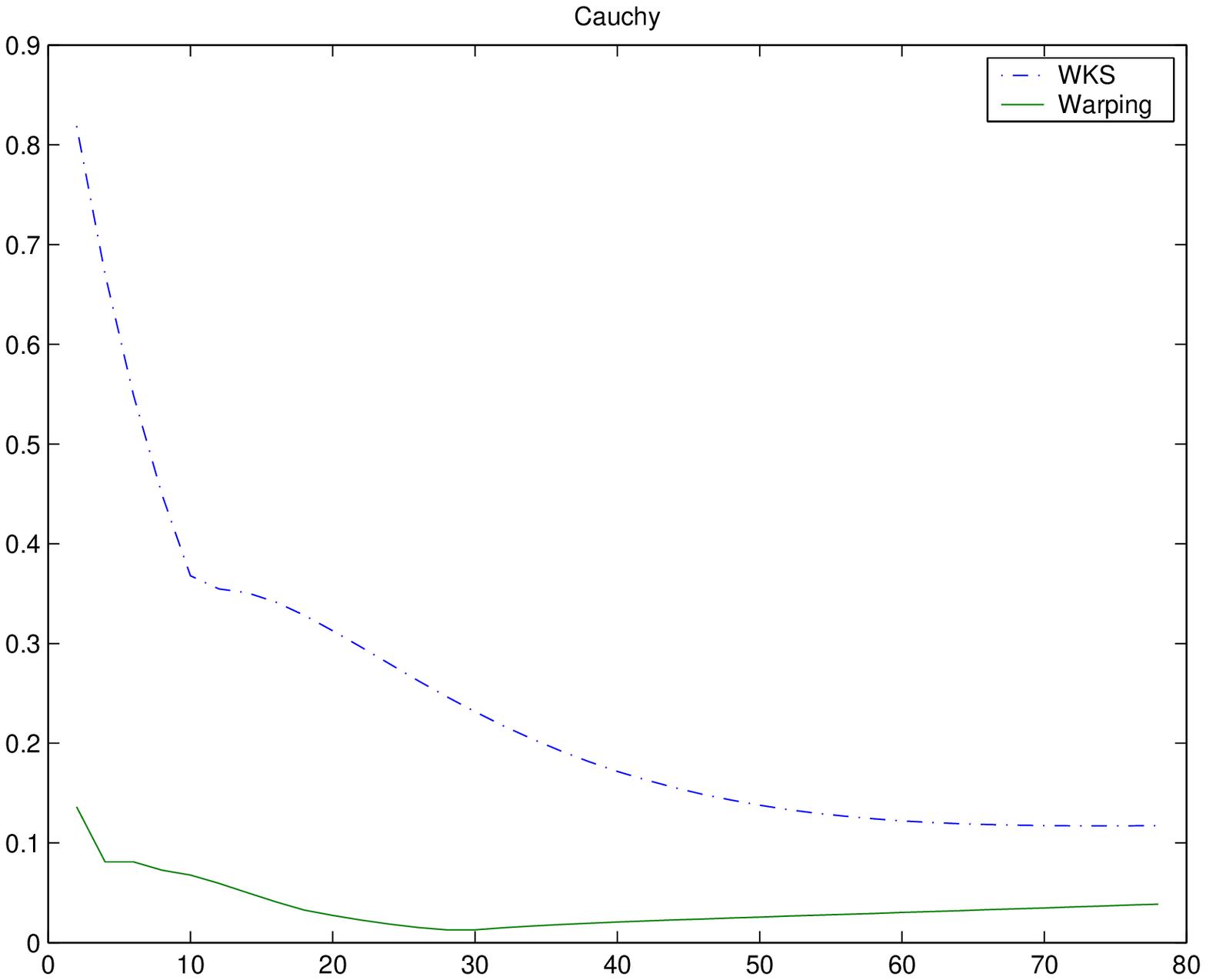}
\includegraphics[width=6.2cm]{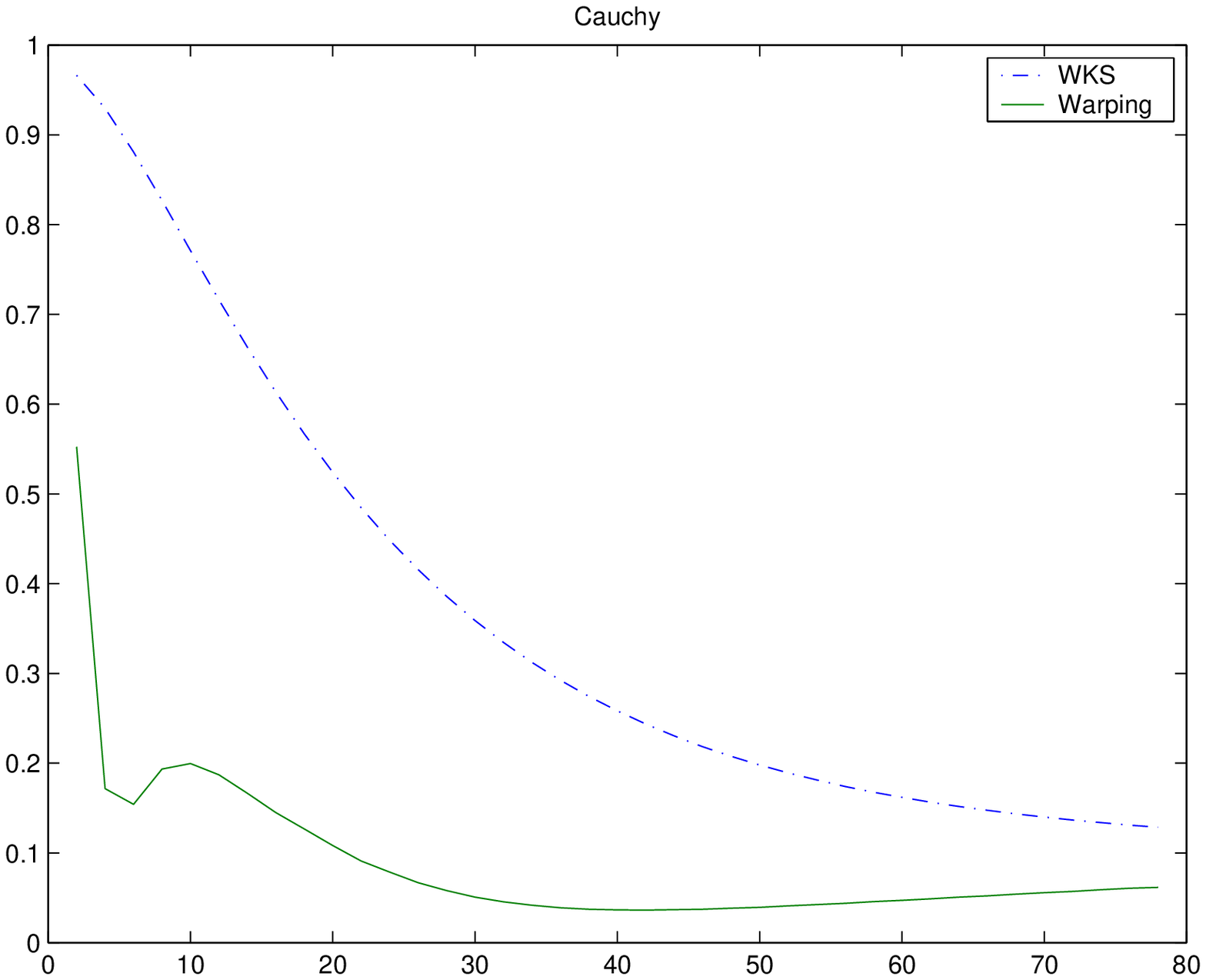}
\includegraphics[width=6.2cm]{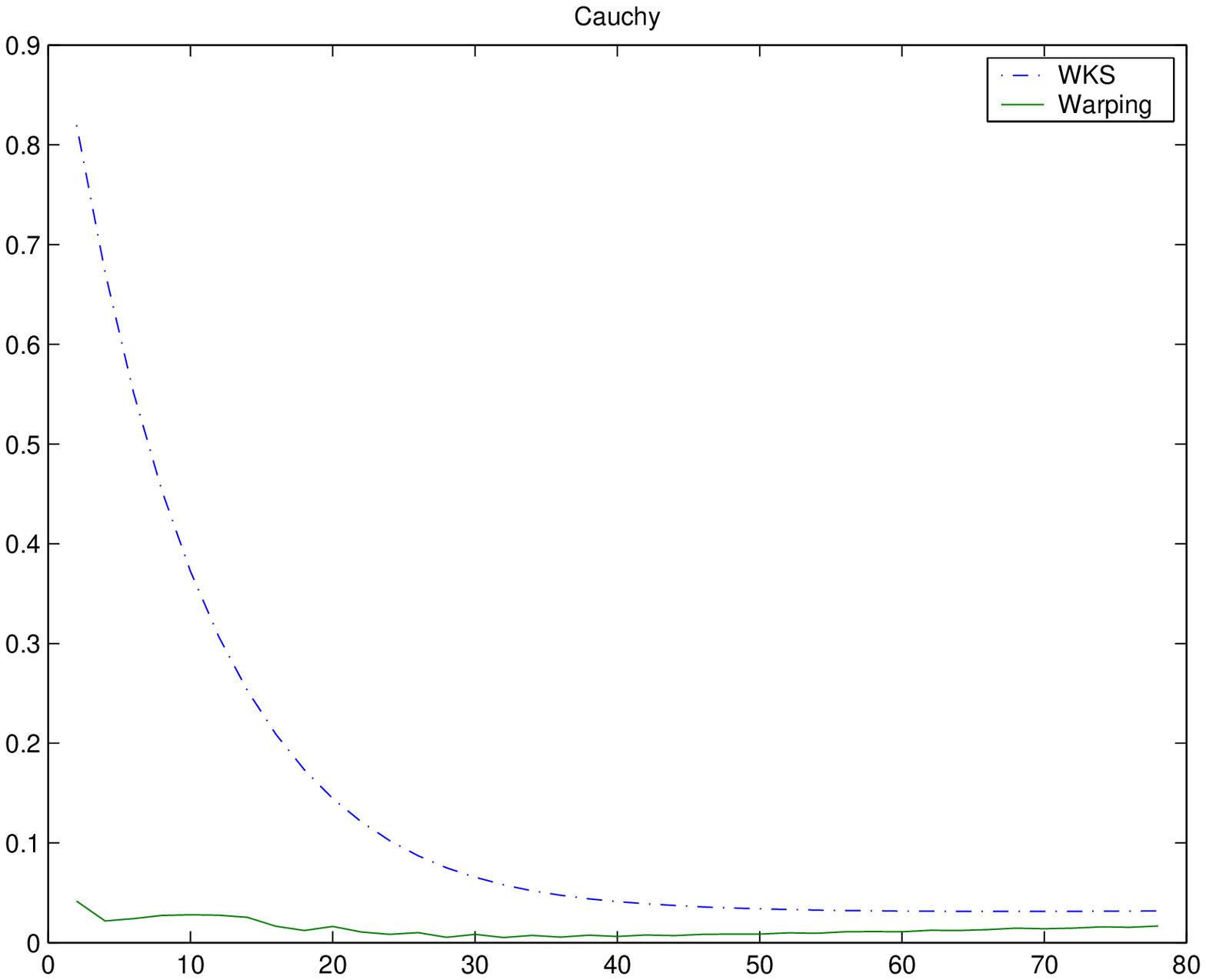}
\caption{Approximations of $\frac 1 {1+t^2}$ for $N=78$ (upper left),
  uniform error as a function of $N$ (upper right), $L^2$ error (lower
  left) and $H_1$ error (lower right).}\label{Cauchy}
\end{center}
\end{figure}

\subsubsection*{Chirp}
We now consider a function with wild oscillations, the (modified)
chirp $t\sin(\frac 1 t)e^{-t}$, restricted to $[0,10]$. The
exponential factor aims at getting an $H_1$ function that is
numerically equal to 0 at $t=10$, so as not to penalize the WKS
approximation with a boundary discontinuity. The two methods
present similar performances, although there is a slight but
noticeable gain of efficiency with the warping, as seen on figure
\ref{chirp}.

\begin{figure}[htbp]
\begin{center}
\includegraphics[width=6.2cm]{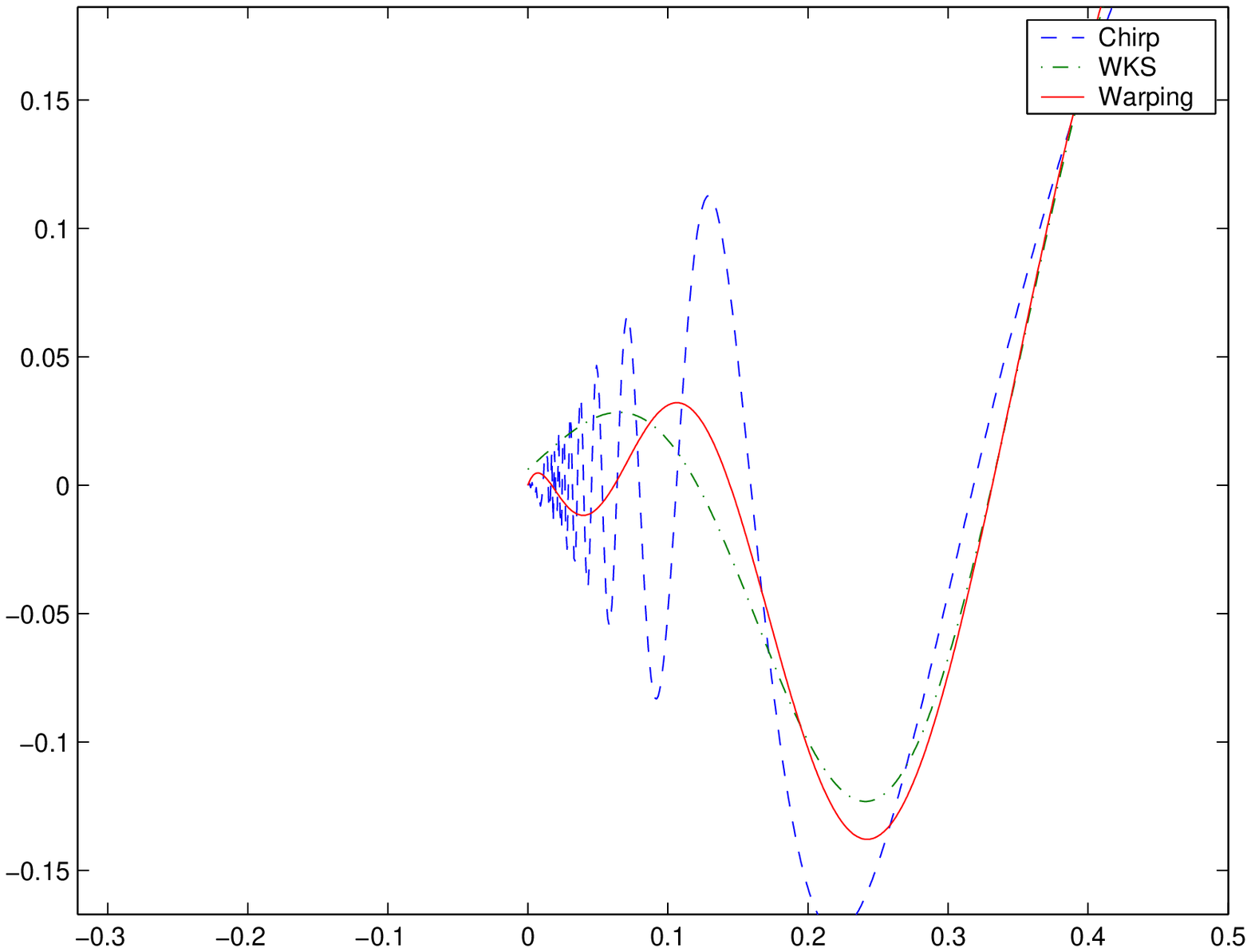}
\includegraphics[width=6.2cm]{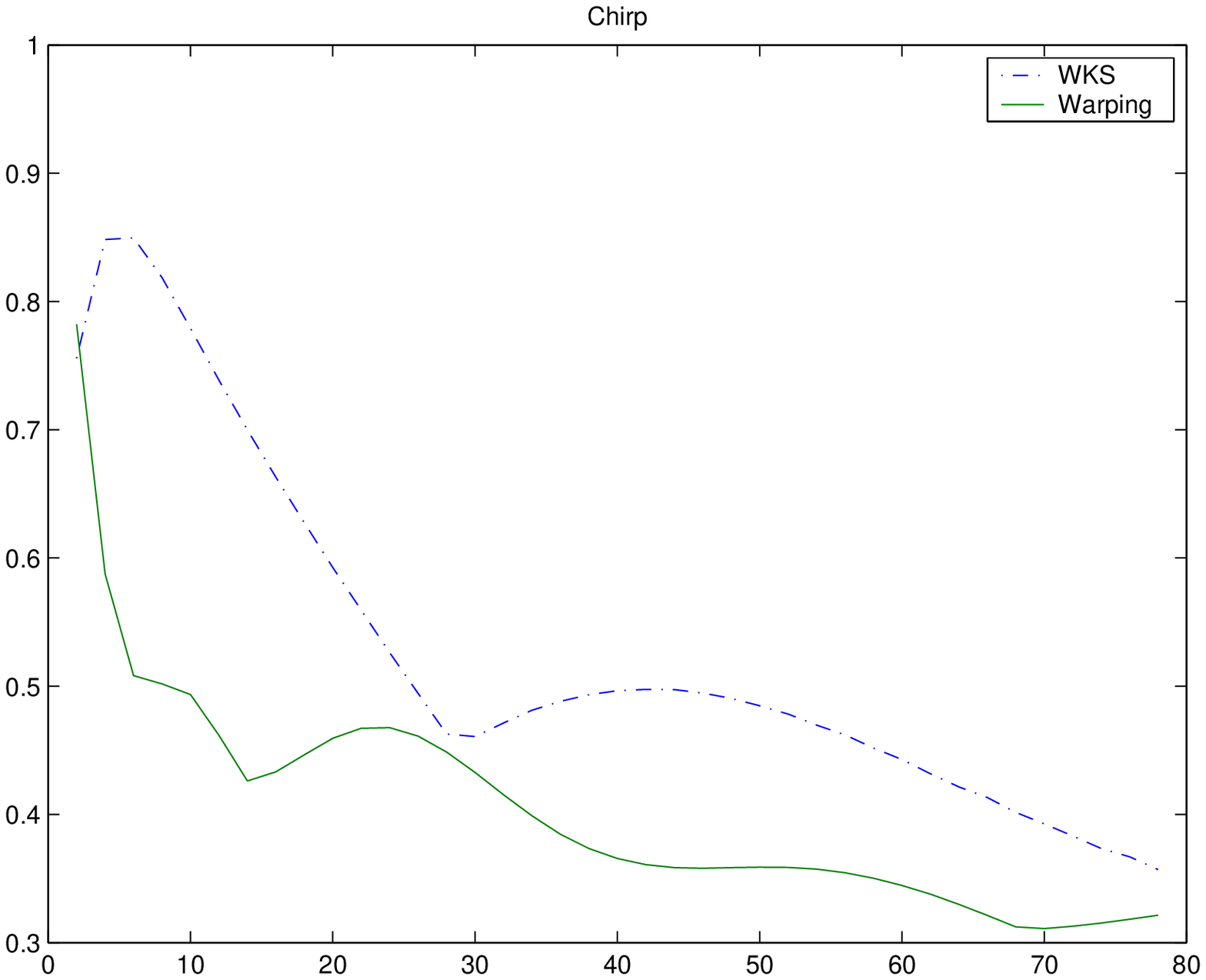}
\includegraphics[width=6.2cm]{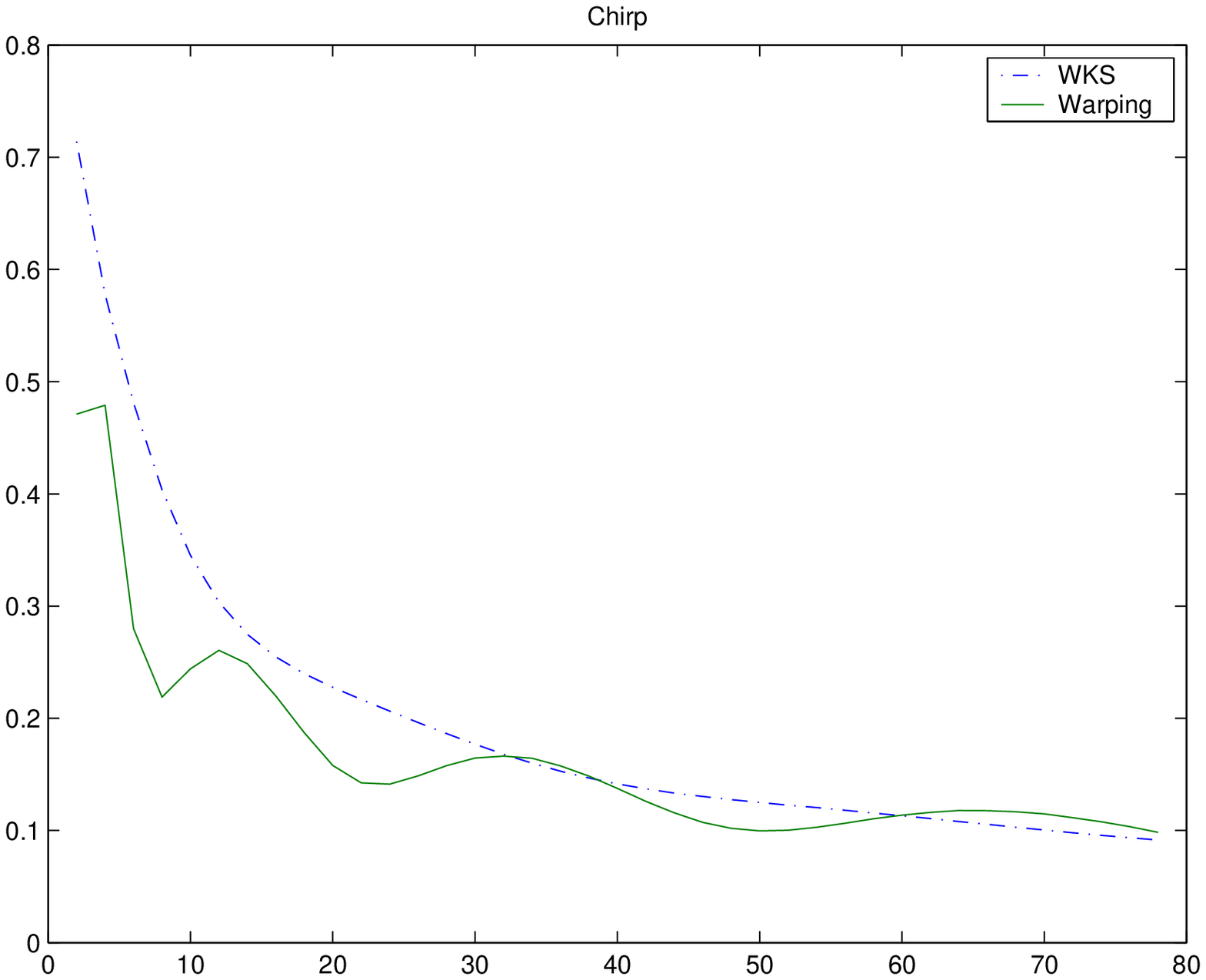}
\includegraphics[width=6.2cm]{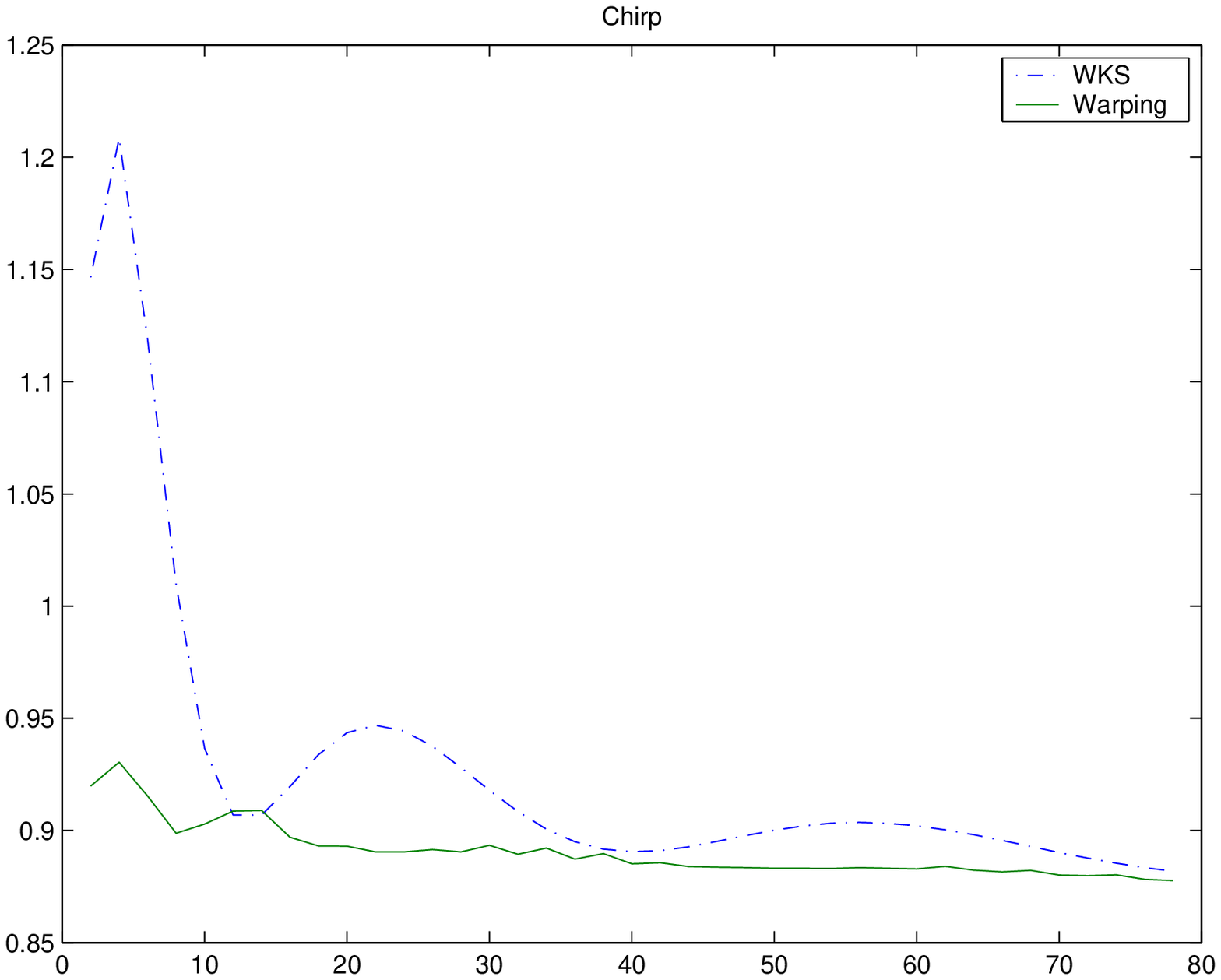}
\caption{Approximations of $t\sin(\frac 1 t)e^{-t}$ for $N=78$ (upper
  left), uniform error as a function of $N$ (upper right), $L^2$ error
  (lower left) and $H_1$ error (lower right).}\label{chirp}
\end{center}
\end{figure}

\subsubsection*{Riemann function}
We now focus on a function which is everywhere irregular and has a
multifractal structure. The Riemann function is defined as
$$R_s(t)=\sum_{n\geq 0}\frac{\sin(n^s t)}{n^s}$$
In our numerical study, we took $s=1.8$, which guarantees that
$R_s\in L^2$. As above, we analyze the restriction to $[0,10]$.
The function is "numerically equal" to zero at $10$, thus there is
no discontinuity at the endpoints.

The results are shown on figure \ref{Riemann}. Clearly the warping
method gives better results. In fact, the sine series defining
$R_s$ being lacunary, the WKS sampling method will not capture
sine waves for increasingly long ranges of values of $N$, and
consequently, for $(n-1)^s\leq N<n^s$, the WKS approximation
$\widetilde X_l$ remains the same (see the steps on figure
\ref{Riemann}). On the contrary, the warping approximation
improves steadily as $N$ grows.

\begin{figure}[htbp]
\begin{center}
\includegraphics[width=6.2cm]{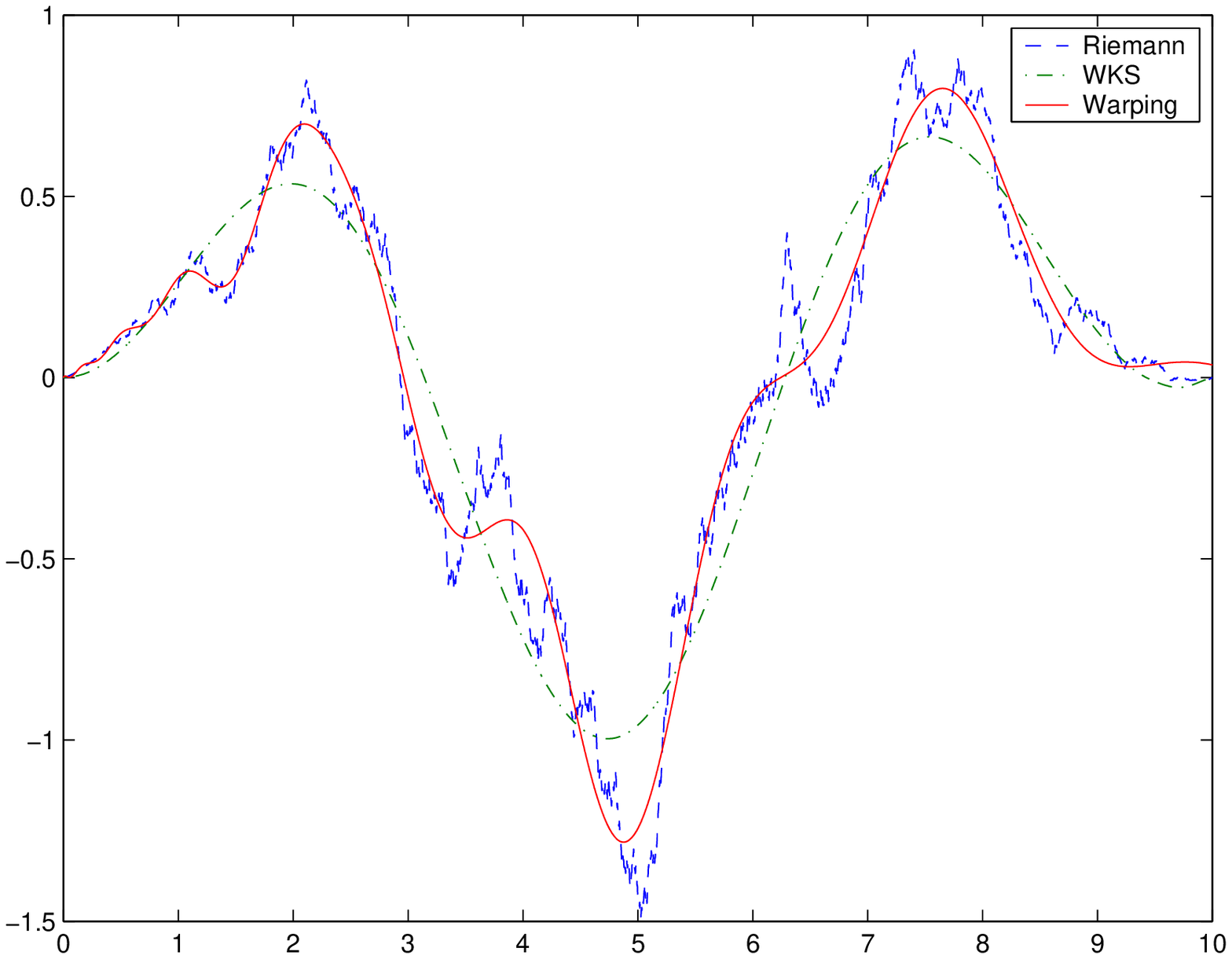}
\includegraphics[width=6.2cm]{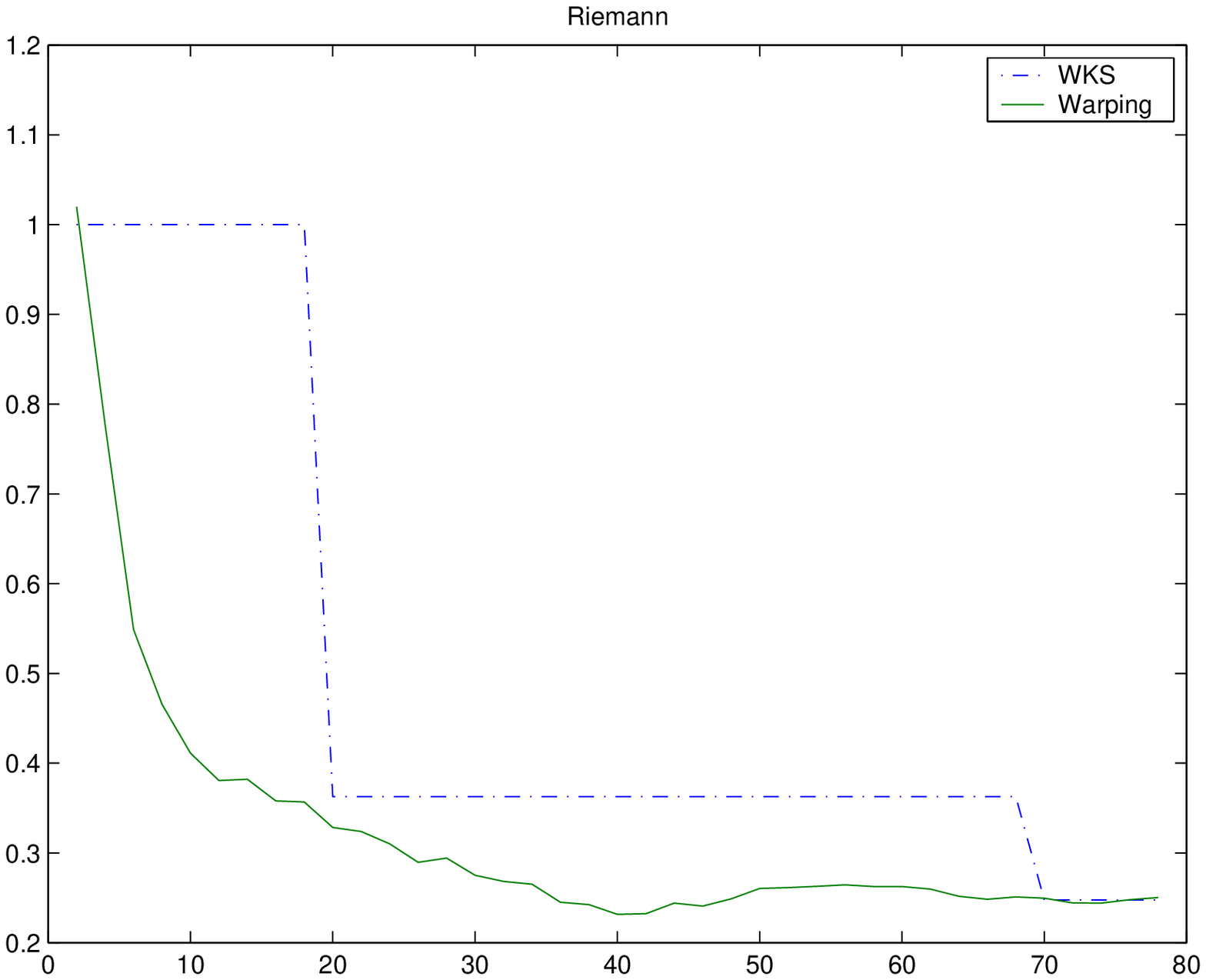}
\includegraphics[width=6.2cm]{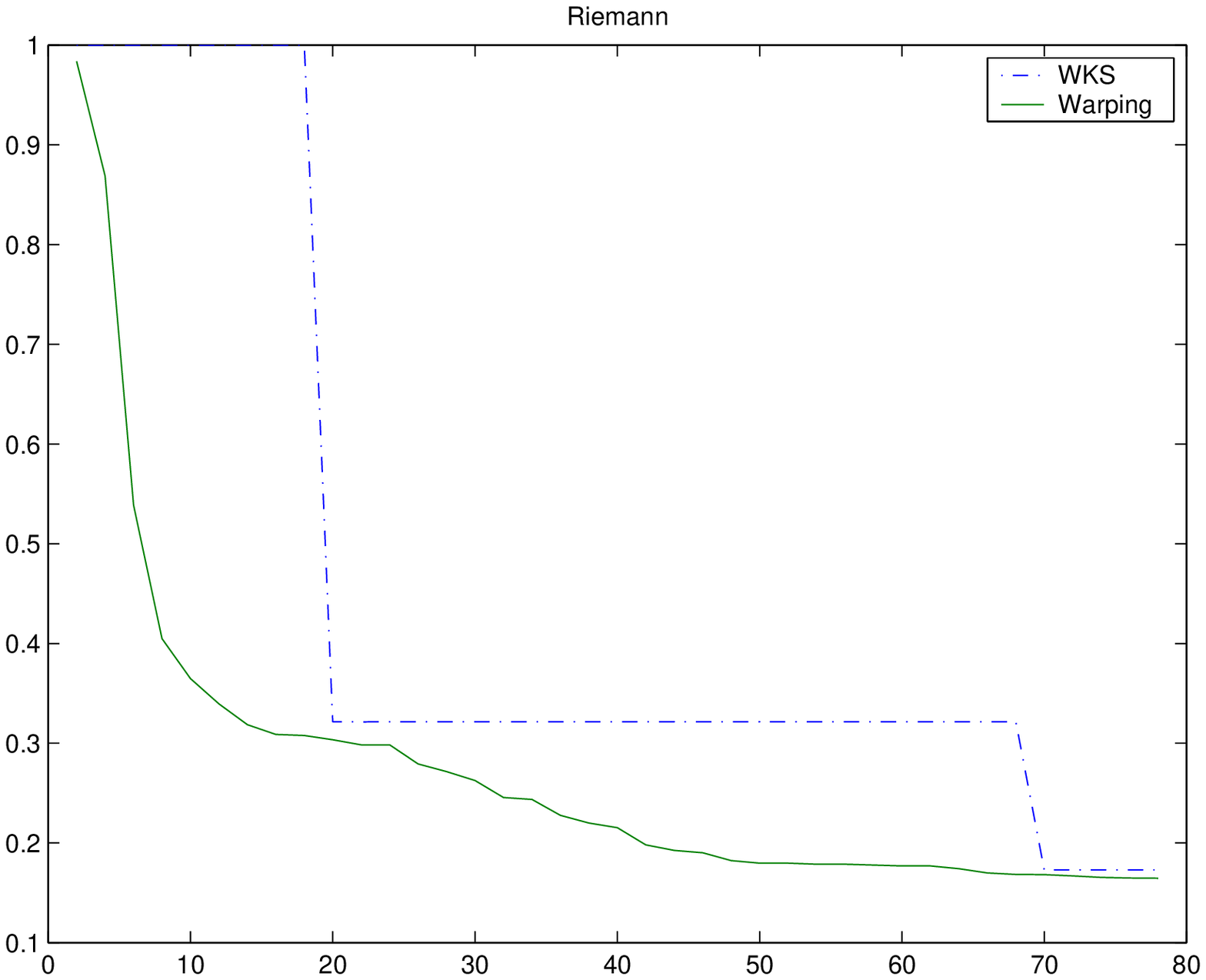}
\includegraphics[width=6.2cm]{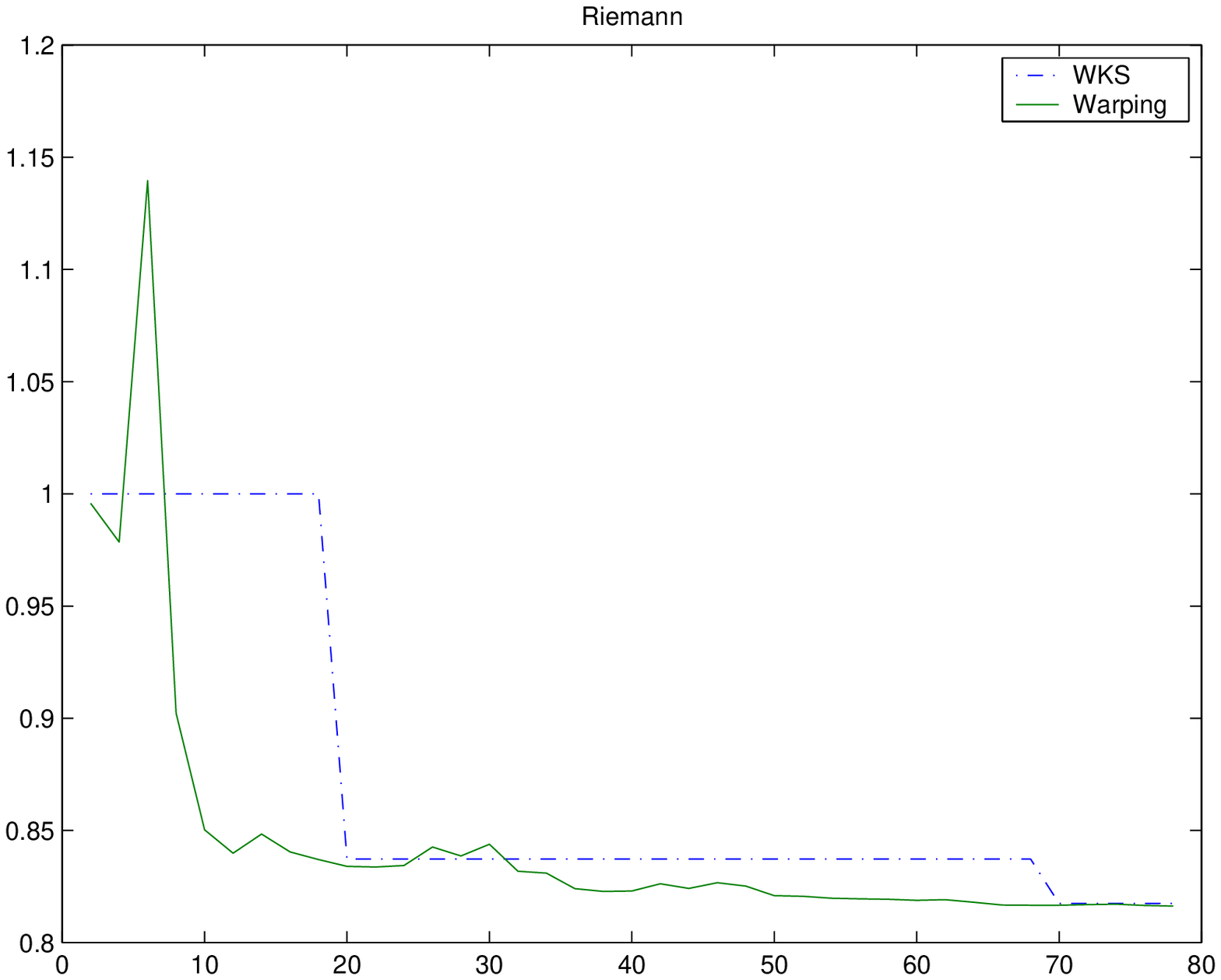}
\caption{Approximations of the Riemann function $R_s$ for $N=50$
  (upper left), uniform error as a function of $N$ (upper right),
  $L^2$ error (lower left) and $H_1$ error (lower
  right).}\label{Riemann}
\end{center}
\end{figure}

\subsubsection{Weierstrass function}
The same phenomenon occurs for the (modified)
Weierstrass function
$$W_h(t)=e^{-\frac{t^2}{\sigma^2}}\sum_{k\geq 0}\lambda^{-kh}\sin(\lambda^k
t)$$

where $h >0$, $\lambda \geq 2$ and $\sigma >0$. This function has
everywhere a H\"{o}lder exponent equal to $h$. The Gaussian factor
allows to obtain an $L^2$ function which is numerically 0 at
$t=10$, and with a Fourier transform:
$$\widehat W_h(\omega)=\sum_{k\geq 0}2^{-kh}e^{-\frac{(\omega-2\pi
    2^k)^2\sigma^2}{2}}$$ so that $W_h$ belongs to the spaces
${\mathcal W}_m^{\mu}$ for all $0 < m < h$ and $\mu \leq h -m$.

From the results in Section \ref{approx}, warping-based sampling
will yield a better approximation than WKS sampling as soon as $h
> 3/2$, with the following choice of the parameters:

$\alpha\beta=h - \frac{3}{2} - 2\varepsilon$, with $2\varepsilon <
h -3/2$, \hspace{0.4cm} $\frac{1}{2} < \beta < \frac{1}{2} +
\frac{\varepsilon}{2h - 1 - 2\varepsilon}$ \hspace{0.2cm} (when
$m=1$).

\smallskip

In order to show that the warping method may outperform WKS sampling
also for functions that do not belong to a space ${\mathcal W}_m^\mu$,
we studied numerically the case $\lambda=2, h=0.8$ and $\sigma=1$. The
results are on figure~\ref{Weierstrass}. Again, the sine series is
lacunary and the WKS error remains constant on large ranges of values
of $N$, resulting in strongly different behaviors for the WKS and
warping approximations.

\begin{figure}[htbp]
\begin{center}
\includegraphics[width=6.2cm]{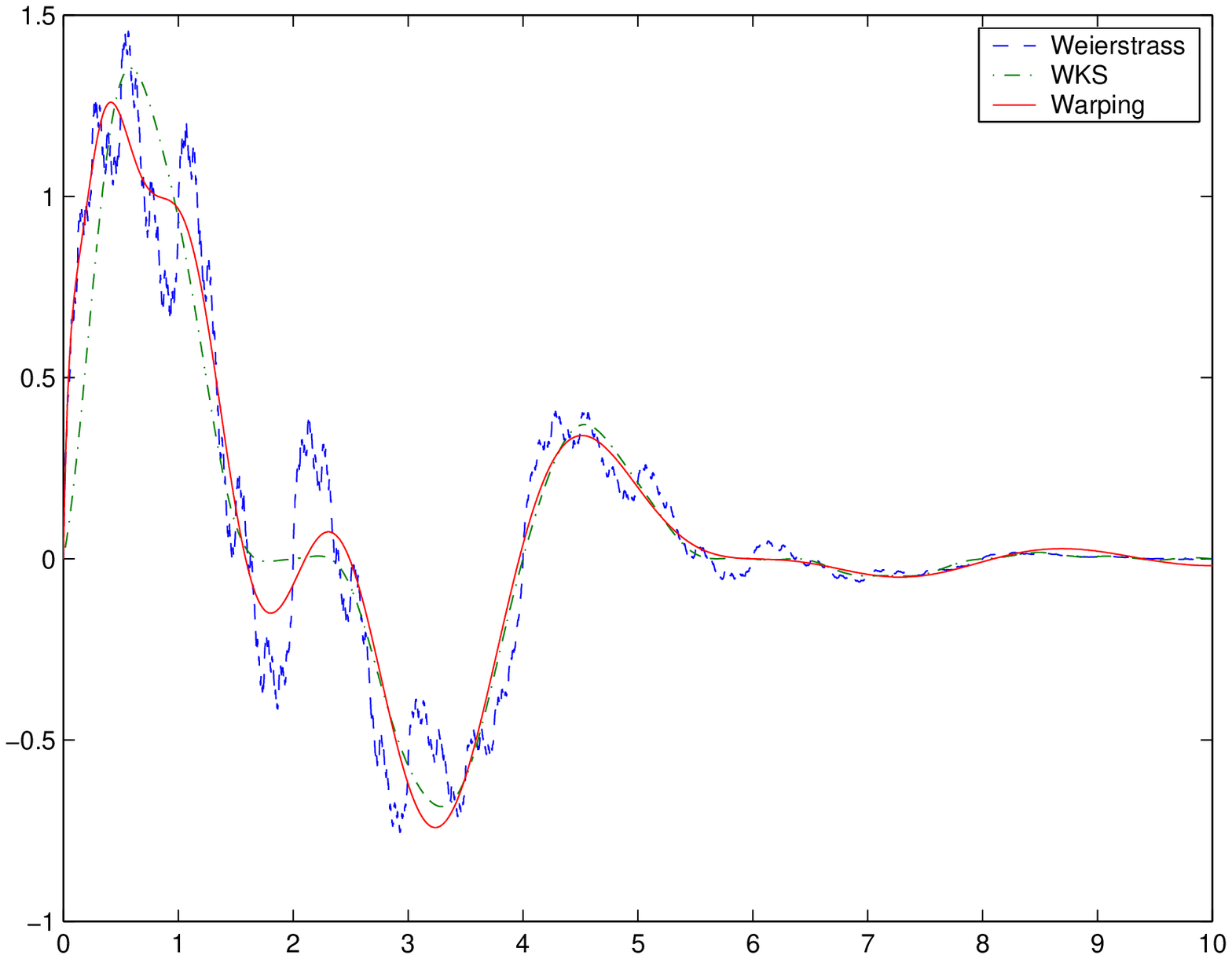}
\includegraphics[width=6.2cm]{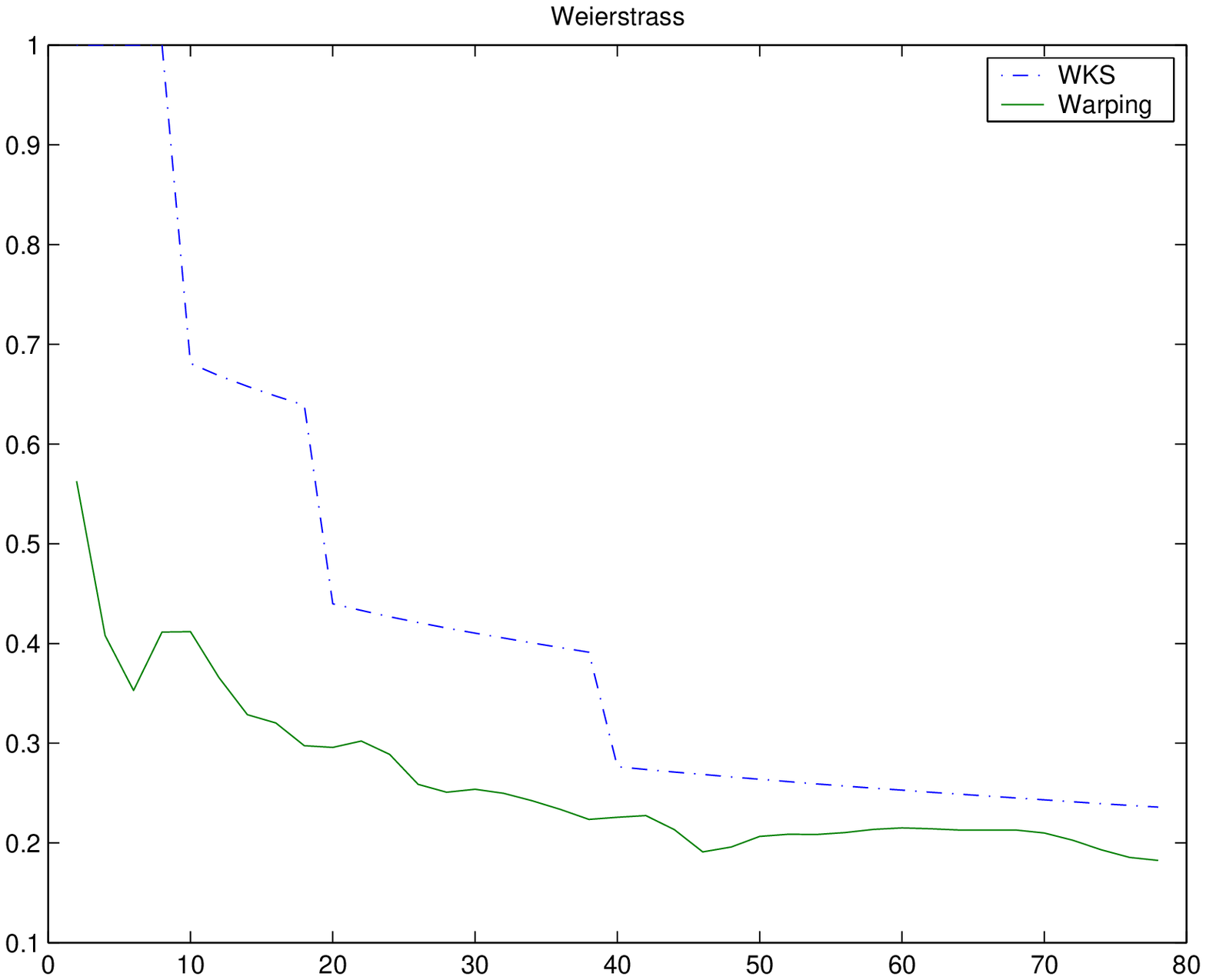}
\includegraphics[width=6.2cm]{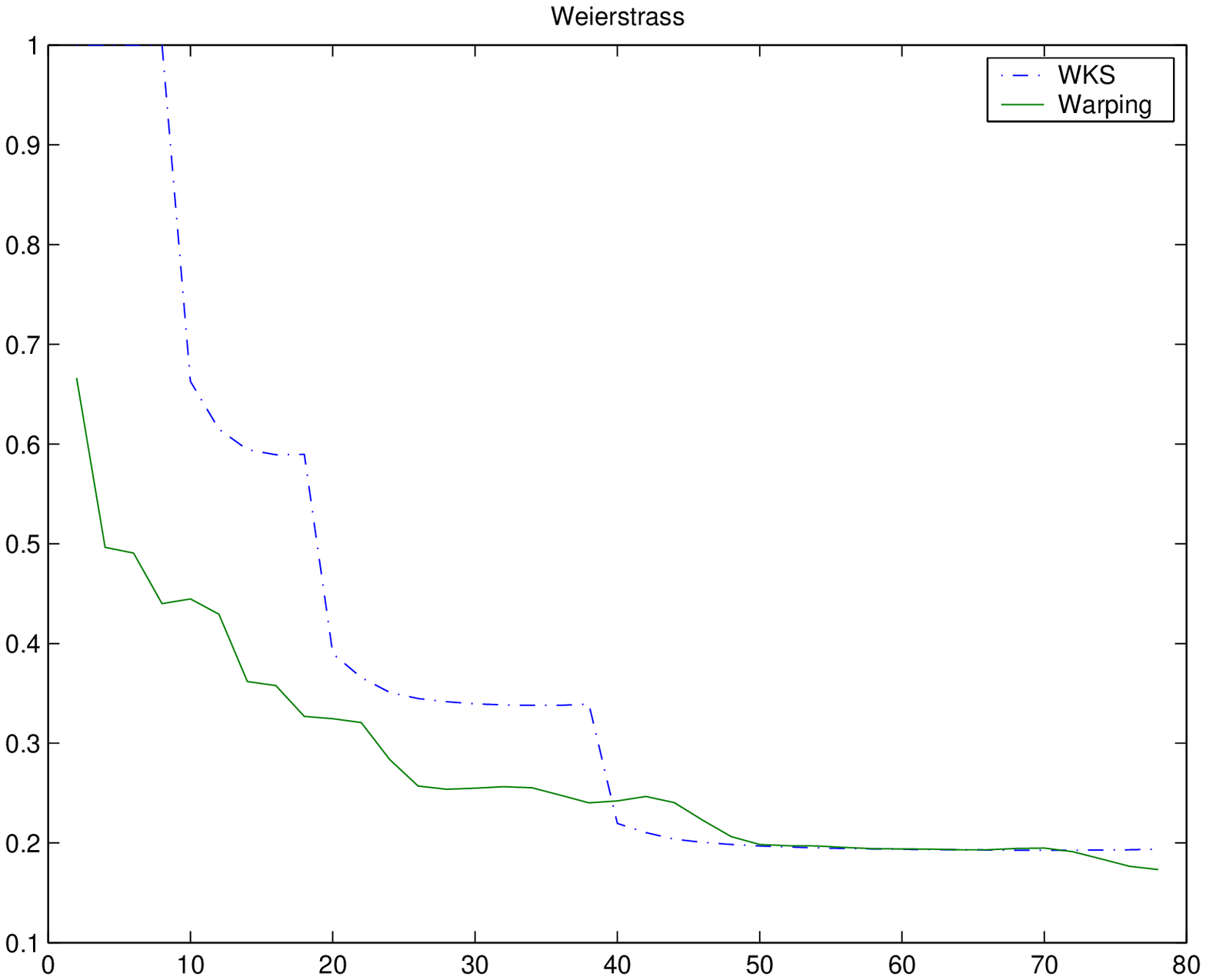}
\includegraphics[width=6.2cm]{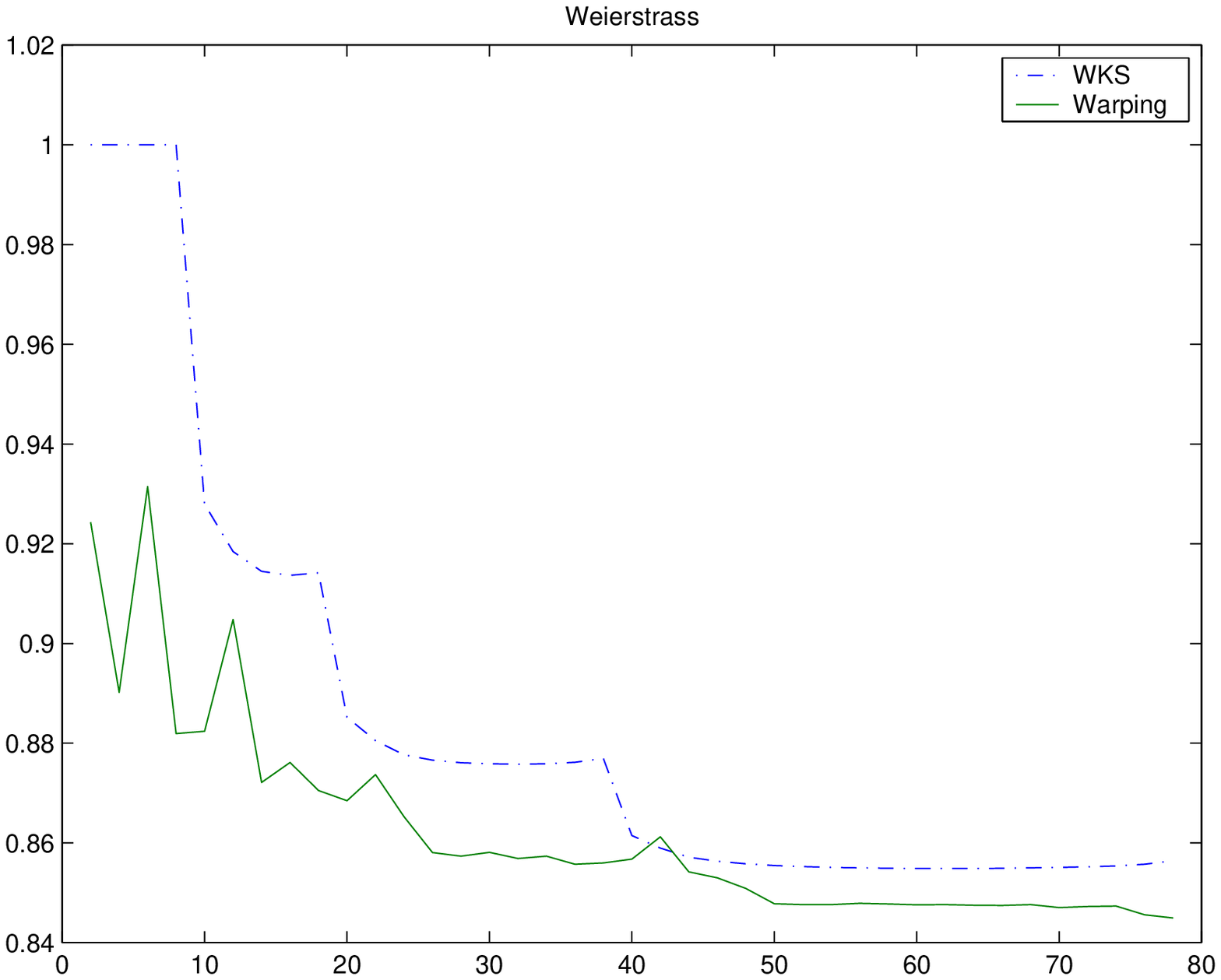}
\caption{Approximations of the Weierstrass function $W_h(t)$ for
  $N=30$ (upper left), uniform error as a function of $N$ (upper
  right), $L^2$ error (lower left) and $H_1$ error (lower
  right).}\label{Weierstrass}
\end{center}
\end{figure}

We conclude this section by mentioning how to treat the case where
$\alpha\neq 1$ or $\beta\neq 1$. This is necessary in order to process
with maximum accuracy functions in spaces ${\mathcal W}_m^\mu$ with
various values of $m$ and $\mu$.

First, if $\alpha\neq 1$ and $\beta=1$, we can use the expressions for
$\gamma_k$ given in section 3, and proceed exactly as for the case
$\alpha=1$ (i.e., $\psi=\arctan$), that is by pre-computing $\gamma_k$
and $\widetilde \gamma_k$. Of course, for the dual functions, more
Dirac masses come into play, and computing the inner product involves
the evaluation of more derivatives of $X$ at 0. If $\beta\neq 1$, the
warping function changes, and things get more complicated. There is
however a situation where analytical computations are feasible:
Indeed, when $\beta$ is an odd integer, it is possible to compute the
expression of $\gamma_k$. Therefore one can proceed as in the case
$\beta=1$. In general, there will be no closed form expression for
$\gamma_k$. These functions should then be approximated using their
expression in the Fourier domain. More precisely, one can pre-compute
the warping function $\psi$ on a grid:
$$\psi(\omega)=c\int_0^\omega \frac{dv}{(1+v^{2\beta})}$$
and use this to tabulate $$\widehat
\gamma_k(\omega)=\sqrt{\psi^\prime(\omega)}e^{2i\pi k\psi(w)}$$
Then one can use an inverse DFT to obtain an approximation of
$\gamma_k$. Likewise, the coefficients $c_k=\langle \widetilde
\gamma_k,X\rangle=\langle \widehat{\widetilde \gamma}_k,\widehat
X\rangle$ must be computed in the Fourier domain.

\addcontentsline{toc}{chapter}{Bibliography}

\end{document}